\numberwithin{equation}{section}
\newcommand{\mathset}[1]{\mathbbm{#1}}
\newcommand{\morf}[4][\to]{ #2 \colon #3 #1 #4}
\newcommand{\F}{\mathcal{F}}
\newcommand{\ft}{(\F_t)_{t\geq 0}}
\newcommand{\Fi}{\mathbb{F}}
\newcommand{\V}{\mathcal{V}}
\newcommand{\A}{\textbf{A}}
\newcommand{\s}{\mathscr S}
\newcommand{\B}{\mathscr B}
\newcommand{\N}{\mathset{N}}
\newcommand{\ca}{c\`adl\`ag}
\newcommand{\R}{\mathset{R}}
\newcommand{\abs}[2][]{#1\lvert #2#1\rvert}
\newcommand{\p}[2][]{#1 ( #2_t #1)_{t\geq 0}}
\renewcommand{\B}{\mathscr B}
 \def \lt{[\hskip-1.3pt[ }
\def \rt{ ]\hskip-1.3pt]}
\def\E{\mathbb{E}}
\def\P{\mathbb{P}}
\def\1{\mathbf{1}}
\def\({\left(}
\def\){\right)}
\def\X{\mathbf{X}}
\def\M{\mathbf{M}}
\def\Y{\mathbf{Y}}
\def\citemma{\cite[Proposition~3.5]{fv-mma}}
\theoremstyle{plain}
\newtheorem{lemma}{Lemma}[section]
\newtheorem{proposition}[lemma]{Proposition}
\newtheorem{theorem}[lemma]{Theorem}
\newtheorem{corollary}[lemma]{Corollary}
\theoremstyle{definition}
\newtheorem{example}[lemma]{Example}
\theoremstyle{definition}
\newtheorem{remark}[lemma]{Remark}
\newcommand{\devnull}[1]{}
\title{Structure of infinitely divisible semimartingales}
\author{Andreas Basse-O'Connor$^{\dagger}$ and Jan Rosi\'nski$^{\ddagger}$\\
{\normalsize Aarhus University and University of Tennessee }\\
{\normalsize $^\dagger$E-mail: basse@imf.au.dk\qquad $^\ddagger$E-mail: rosinski@math.utk.edu}
}
\begin{document}\maketitle
\begin{abstract}
This paper gives a complete characterization of infinitely divisible semimartingales, i.e., semimartingales whose finite dimensional distributions are infinitely divisible. An explicit and essentially unique decomposition of such semimartingales is obtained. A new approach, combining series decompositions of infinitely divisible processes with detailed analysis of their jumps, is presented. 
As an ilustration of the main result, the semimartingale property is explicitely determined for a large class of stationary increment processes and several examples of processes of interest are considered. 
These results extend Stricker's theorem characterizing Gaussian semimartingales and Knight's theorem describing Gaussian moving average semimartingales, in particular.

\medskip 
\noindent
\textit{Keywords: Semimartingales; Infinitely divisible processes; Stationary processes; Fractional processes} 

\smallskip
\noindent
\textit{AMS Subject Classification: 60G48; 60H05; 60G51; 60G17} 
\end{abstract}

\section{Introduction}

 Semimartingales play a crucial role in stochastic analysis as they form the class of   \emph{good integrators}  for the It\^o type stochastic integral,  cf.\ the  Bichteler--Dellacherie Theorem \cite{Bichteler}, see also \citet{B-D-direct-proof} for a direct proof. 
Semimartingales also play a fundamental role in mathematical finance. Roughly speaking, the (discounted) asset price process  must be a semimartingale in order to preclude arbitrage opportunities, see   
\citet[Theorems 1.4, 1.6]{B-D-direct-proof} for details, see also \cite{K-P}.
The question whether a given process is a semimartingale is also of importance in stochastic modeling, where long memory processes with possible jumps and high volatility are considered as driving processes for stochastic differential equations. Examples of such processes include various fractional, or more generally, Volterra processes driven by L\'evy processes. 

The problem of identifying semimartingales within given classes of stochastic processes has a long history. In the context of Gaussian processes, it was intensively studied in 1980s. \citet{Galchuk} investigated Gaussian semimartingales addressing a question posed by Prof. A.N.\ Shiryayev. Key results  on Gaussian semimartingales are due to \citet[Theorem~1.8]{Gau_Qua} and \citet[Th\'eor\`eme~1]{Stricker_gl}, see also   
 \citet[Ch.~4.9]{Lip_S} and \cite{Emery,  Andreas2, Andreas1, Galchuk}. A particularly interesting result is due to  \citet[Theorem~6.5]{Knight}: Let $\X=(X_t)_{t\geq 0}$  
 be  a  Gaussian moving average of the form 
\begin{equation}\label{MA-X}
 X_t=\int_{-\infty}^t \phi(t-s)\, dW_s, 
\end{equation}
where $(W_t)_{t\in \R}$ is a Brownian motion and $\morf{\phi}{\R}{\R}$ is a Lebesgue square integrable deterministic function vanishing on the negative axis. Then  $\X$ is a semimartingale with respect to the filtration $(\F^W_t)_{t\geq 0}$ if and only if $\phi$ is absolutely continuous on $\R_+$ with a square integrable derivative. Then, $\X$ can be decomposed uniquely into a  Brownian motion and a predictable process of finite variation. Extensions of Knight's result were given by \citet{Yor:sem},   see also \cite{Cherny, Patrick, Andreas3, Basse_Graversen}.

The class of infinitely divisible processes is a natural extension of the class of Gaussian processes. A process $\mathbf{X}=(X_t)_{t\geq 0}$ is said to be infinitely divisible if its all finite dimensional distributions are infinitely divisible.  This work is aimed to determine the structure of infinitely divisible semimartingales. 

Recall that a semimartingale $\mathbf{X}=(X_t)_{t\geq 0}$, by definition, admits a decomposition 
 \begin{equation}\label{decomp-X}
 X_t=X_0+M_t+A_t,\quad t\geq 0,
 \end{equation}
 where $\M= \p M$ is a local martingale and $\A= \p A$ is a process of finite variation.  
 The problem of identifying infinitely divisible semimartingales can be divided into two questions for an infinitely divisible process $\mathbf{X}=(X_t)_{t\geq 0}$:
 \begin{enumerate}
\item[Q1.] 
Assuming that $\X$ is a semimartingale, what  is the structure of the processes $\bf M$ and $\bf A$ in its decomposition \eqref{decomp-X}? In particular, do they have to be infinitely divisible?
\item[Q2.]   How to verify whether or not a given infinitely divisible process $\X$ is a semimartingale?
\end{enumerate}

The family of infinitely divisible processes constitutes a huge class, so we will focus on a parametrization that will be convenient to state our results as well as for applications. 
 We will assume that a primary description of an infinitely divisible processes $\X=(X_t)_{t\geq 0}$ is its 'spectral' representation of the form
\begin{equation}\label{rep-X-in}
 X_t=\int_{(-\infty,t]\times V} \phi(t,s,v)\,\Lambda(ds,dv),   
\end{equation} 
where $V$ is a set, $\Lambda$ is an  independently scattered infinitely divisible random measure on $\R\times V$, and $\morf{\phi}{\R^2\times V}{\R}$ is a measurable deterministic function (see Section \ref{sec-def} for details). If we further assume that $\phi(t,s,x)=0$ whenever $s>t$, then $\X$ is adapted to the filtration $\Fi^{\Lambda}=(\mathcal{F}^{\Lambda}_t)_{t \ge 0}$, where $\mathcal{F}^{\Lambda}_t$ is the $\sigma$-algebra generated by $\Lambda$ restricted to $(-\infty, t]\times V$. We characterize the semimartingale property of $\X$ with respect to  $\Fi^{\Lambda}$, which,  in the spirit, is similar to the above mentioned results of Stricker and Knight. However, in the non Gaussian case, our methods are completely different. We combine series representations of \ca\ infinitely divisible processes with  detailed analysis of their jumps, which is a novel approach as far as we know. This is possible because such series representations converge uniformly a.s.\ on compacts, as shown in \citet[Theorem 3.1]{Ito-Nisio-D}.

Section~\ref{sec-def} contains preliminary definitions and facts. Our main result, Theorem~\ref{thm1}, is stated and proved in Section~\ref{s-sem}. It gives the necessary and sufficient conditions for $(\X, \Fi^{\Lambda})$ to be a semimartingale, together with an essentially unique explicit decomposition of $\X$ into infinitely divisible components. It completely answers the above question Q1 and gives a framework how to approach question Q2 in concrete situations.  In Section~\ref{appl} we obtain explicit necessary and sufficient conditions for a large class of stationary increment infinitely divisible processes to be  semimartingales. These conditions generalize, in a natural way,   findings in \citet{Basse_Pedersen}. We conclude this paper with  examples showing how these conditions can be verified for several processes of interest.

\section{ Preliminaries}\label{sec-def}

Throughout the paper  $(\Omega,\F,\P)$ stands for a probability space and $(V, \mathcal V)$ denotes a  countably generated measurable space.
 Consider a process $\X=(X_t)_{t\geq 0}$ of the form 
\begin{equation}\label{def-X}
 X_t=\int_{(-\infty,t]\times V} \phi(t, s,v)\,\Lambda(ds,dv)  
\end{equation}
where $\morf{\phi}{\R^2\times V}{\R}$ is a measurable deterministic function such that for every $(s,v) \in \R \times V$, $\phi(\cdot, s,v)$ is \ca\ and   $\Lambda$ is an independently scattered infinitely divisible random measure   
on  a $\sigma$-ring $\s$   of subsets of $\R\times V$ such that
\begin{equation} \label{V}
\big\{[a,b]\times B: \ [a,b] \subset \R,  \ B \in \mathcal{V}_0\big\} \subset \s \subset \B(\R)\otimes \V,
\end{equation}
for some countable ring $\mathcal{V}_0$ generating $\mathcal{V}$. It follows that $\sigma(\s)=\B(\R)\otimes \V$. For example, $\Lambda$ can be a Poisson random measure. 
In general, we assume that for every $A \in \s$, $\Lambda(A)$ has an infinitely divisible distribution with the cumulant
\begin{equation}\label{Lambda} 
 \log \E e^{i\theta\Lambda(A)}= \int_{A} \Big[ i\theta b(u)-\frac{1}{2}\theta^2 \sigma^2(u) +\int_{\R} \big(e^{i\theta x}-1-i\theta \lt x\rt\big) \, \rho_{u}(dx) \Big] 
 \,\kappa(d u),
\end{equation}
where $u = (s,v) \in \R\times V$, $\morf{b}{\R\times V}{\R}$ is a measurable function,  $\kappa$ is a $\sigma$-finite measure on $\R\times V$,  $\{\rho_{u}\}_{u\in \R\times V}$ is a measurable family of L\'evy measures on $\R$, and 
\begin{equation}\label{tr}
 \lt x\rt= \frac{x}{|x| \vee 1} = \begin{cases}
    x    &  \text{if } |x|<1\, ,  \\ 
    \mathrm{sgn}(x)    & \text{otherwise}
\end{cases}  
\end{equation}
is a truncation function. For example, if $\Lambda$ is a Poisson  random measure, then $b(u)=1$, $\sigma^2(u)=0$, and $\rho_u=\delta_1$. The integral in \eqref{def-X} is defined as in \citet{Rosinski_spec}.   Accordingly to \cite[Theorem 2.7]{Rosinski_spec},  given a measurable deterministic function $f$, the integral $\int_{\R\times V} f(u) \, \Lambda(du)$ exists	if and only if
\begin{enumerate}[(a)]
   \item \label{i1} $\int_{\R\times V} |B(f(u), u)| \, \kappa(du) < \infty$,
      \item \label{i2} $\int_{\R\times V} K(f(u), u) \, \kappa(du) < \infty$,
\end{enumerate} 
where
\begin{align} \label{B}
B(x, u) = {}& x b(u) + \int_{\R} \big( \lt xy\rt - x \lt y\rt\big) \, \rho_{u}(dy)\quad \text{and}
\\
\label{K}
K(x, u) = {}& x^2 \sigma^2(u) + \int_{\R}  \lt xy\rt^{2} \, \rho_{u}(dy), \qquad x \in \R, \ u \in \R\times V.
\end{align}
The process $\X$ in \eqref{def-X} is infinitely divisible, i.e., all its finite dimensional distributions are infinitely divisible.


We will further require that 
\begin{equation} \label{Lambda2}
\Lambda(\{s\} \times B) = 0\quad  \text{a.s.\ for every } s \in \R, \ B \in \mathcal{V}_0.
\end{equation}
This condition is equivalent to that $\kappa(\{s\} \times V)=0$ for every $s \in \R$. 

Let $\Fi^{\Lambda}=(\mathcal{F}^{\Lambda}_t)_{t \ge 0}$ be the augmented  filtration generated by $\Lambda$,  i.e.\ $\Fi^\Lambda$ is the least filtration satisfying the \emph{usual conditions} of right-continuity and completeness such that 
\begin{equation}\label{def_fil}
\sigma\Big(\Lambda(A): A\in \s,\, A\subseteq (-\infty,t]\times V\Big)\subseteq \F^\Lambda_t,\qquad t\geq 0.
 \end{equation} 

\noindent {\sc Definition} (semimartingale):
 Let  $\Fi=\ft$ be a  filtration. An $\Fi$-adapted process $\X=\p X$ is  a semimartingale with respect to $\Fi$ if it admits a decomposition 
\begin{equation}\label{ca_de_4}
 X_t=X_0+A_t+M_t,\qquad t\geq 0,
 \end{equation}
where $\M=\p M$ is a \ca\ local martingale with respect to $\Fi$,  $\A =\p A$ is  an $\Fi$-adapted process with \ca\ paths of finite variation and $A_0=M_0=0$. (C\`adl\`ag  means right-continuous with left-hand limits.) Moreover, $\X$ is called  a \emph{special semimartingale} if, in addition,  $\bf A$ in \eqref{ca_de_4} can be chosen predictable. In the later case, the decomposition \eqref{ca_de_4} is unique and  is called the \emph{canonical decomposition} of $\X$ and processes $\M$ and $\A$ are called the martingale and finite variation components, respectively. 
We refer to \citet{Jacod_S} and \citet{Protter} for basic properties of semimartingales.  

For stochastic processes $\X=\p X$ and $\Y=\p Y$   we will write $\X=\Y$ when $\X$ and $\Y$ are indistinguishable, i.e., $\{\omega: X_t(\omega) \ne Y_t(\omega) \ \text{for some} \ t \ge 0\}$ is a $\P$-null set. 
For each \ca\ function $\morf{g}{\R_+}{\R}$  let $\Delta g(t)=\lim_{s\uparrow t, s<t} (g(t)-g(s))$ denote the jump of $g$ at $t>0$ and $\Delta g(0)=0$.

 \section{Infinitely divisible semimartingales} \label{s-sem}

The following is our main result.
\begin{theorem}\label{thm1}
Let $\mathbf{X}=\p X$ be a process of the form 
\begin{equation}\label{def_Y_phi} 
X_t=\int_{(-\infty, t] \times V} \phi(t, s,v)\,\Lambda(ds,dv), 
\end{equation}
specified by \eqref{def-X}--\eqref{tr}, and let $B$ be given by \eqref{B}. Assume that  for every $t>0$ 
\begin{equation} \label{drift}
\int_{(0,t] \times V} \big| B\big(\phi(s, s, v), (s,v)\big) \big| \, \kappa(ds, dv) < \infty.
\end{equation}
Then  $\mathbf{X}$ is a semimartingale with respect to the filtration 
$\mathbb{F}^{\Lambda}=(\mathcal{F}^{\Lambda}_t)_{t \ge 0}$ if and only if
\begin{equation}\label{dec}
   X_t = X_0 + M_t + A_t, \quad t \ge 0,
\end{equation}
where $\M= \p M$ is a  continuous in probability semimartingale with  independent increments given by 
\begin{equation} \label{M}
M_t = \int_{(0,t] \times V} \phi(s,s,v)\,\Lambda(ds,dv), \quad t\ge 0
\end{equation}
(i.e., the integral exists), and $\mathbf{A}= \p A$ is a predictable \ca\ process of finite variation  given by 
\begin{equation} \label{A}
A_t = \int_{(-\infty,t] \times V} [\phi(t,s,v) -  \phi(s_{+}, s,v)] \,\Lambda(ds,dv).
\end{equation}
Decomposition \eqref{dec}	 is unique in the following sense: If $\X=X_0+\M'+\A'$, where $\M'$ is a continuous in probability semimartingale with independent increments and $\A'$ is a predictable \ca\ process of finite variation, then $\M'=\M+g$ and $\A'=\A-g$ for some  continuous deterministic function $g$ of finite variation, with $\M$ and $\A$  given by \eqref{M} and \eqref{A}.

 If  $\bf X$ is a semimartingale, then it is a special semimartingale if and only if   $\E\abs{M_t}<\infty$ for all $t>0$; and in this case  $(M_t-\E M_t)_{t\geq 0}$ is a  martingale and  
\begin{equation}
  X_t= X_0+ (M_t -\E M_t)+ (A_t+\E M_t),\quad t\geq 0
\end{equation}
  is the canonical decomposition of $\X$. 
\end{theorem}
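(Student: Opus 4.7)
The plan is to treat Theorem~\ref{thm1} in three stages: construct the decomposition \eqref{dec} from \eqref{M}--\eqref{A} under \eqref{drift} (which handles the ``$\Leftarrow$'' direction), prove that the semimartingale property forces it, and then conclude uniqueness together with the special-semimartingale characterization.

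\textbf{Construction.} First I would verify that the integral \eqref{M} exists by checking \eqref{i1}--\eqref{i2} for the integrand $(s,v)\mapsto \phi(s,s,v)\mathbf{1}_{(0,t]}(s)$: condition \eqref{i1} is exactly the hypothesis \eqref{drift}, and \eqref{i2} follows from the $K$-integrability already inherent in the existence of the original integral \eqref{def_Y_phi}, using c\`adl\`ag-ness of $\phi(\cdot,s,v)$ to dominate the diagonal evaluation by a running supremum. Since $\phi(\cdot,s,v)$ is right-continuous, $\phi(s_{+}, s, v)=\phi(s, s, v)$, and writing $\phi(t,s,v)=\phi(s,s,v)+[\phi(t,s,v)-\phi(s_{+}, s, v)]$ combined with linearity of the Rosi\'nski integral yields $X_t-X_0 = M_t+A_t$ with $\A$ as in \eqref{A}. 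The process $\M$ inherits independent increments from the independent scattering of $\Lambda$, is continuous in probability by \eqref{Lambda2}, hence is a (CIP PII) semimartingale by classical theory. For $\A$, the integrand $t\mapsto\phi(t,s,v)-\phi(s_{+},s,v)$ vanishes at $t=s_{+}$ and is c\`adl\`ag in $t$ and $\mathcal F^{\Lambda}_{s}$-measurable in $(s,v)$; this delivers predictability and c\`adl\`ag paths, with finite variation coming from the decomposition. This settles the ``$\Leftarrow$'' direction.

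\textbf{Necessity (main obstacle).} Suppose $\X$ is an $\Fi^{\Lambda}$-semimartingale. The aim is to show that the martingale component must agree with $\M$ up to a continuous deterministic drift, and the key tool is the uniformly-on-compacts convergent series representation of c\`adl\`ag infinitely divisible processes from \citet{Ito-Nisio-D}. Applied to the Poissonian part of $\Lambda$, this represents it as a countable sum of atoms $\{(\tau_n, v_n, J_n)\}$ and makes the jumps of $\X$ explicit: $\Delta X_t = \sum_n \phi(t,t,v_n)\, J_n\, \mathbf{1}_{\{\tau_n = t\}}$. Hence the predictable compensator of the jump measure of $\X$, as dictated by its semimartingale structure, coincides with the image of the compensator of $\Lambda$'s Poisson point measure under $(s,v,x)\mapsto(s, \phi(s,s,v)x)$, matching exactly the jump structure of $\M$ from \eqref{M}. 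An analogous identification via quadratic variation handles the Gaussian component, while the deterministic drift of $\Lambda$ is absorbed into a continuous FV term. Together these force any semimartingale decomposition of $\X$ to have martingale part equal to $\M$ modulo a continuous deterministic FV function, whence $\A := \X - X_0 - \M$ is the required predictable c\`adl\`ag FV process.

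\textbf{Uniqueness and special semimartingales.} If $\X = X_0 + \M' + \A'$ with $\M'$ a CIP PII semimartingale and $\A'$ predictable c\`adl\`ag FV, then $g := \M' - \M = \A - \A'$ is simultaneously a CIP PII process and a predictable c\`adl\`ag FV process. Predictability and the c\`adl\`ag property force all jump times of $g$ to be predictable stopping times; in the PII-generated filtration these reduce to deterministic times, and continuity in probability rules them out, so $g$ is continuous. A continuous PII of finite variation has no Gaussian or jump component, hence must be a deterministic continuous FV function, yielding the claimed form of $g$. Finally, since $\A$ in \eqref{dec} is already predictable, $\X$ is special iff $\M$ is, and classically (\citet{Jacod_S}) a CIP PII semimartingale $\M$ is special iff $\E\abs{M_t}<\infty$ for every $t$; in that case $t\mapsto \E M_t$ is a continuous deterministic function of finite variation and $(M_t-\E M_t)_{t\geq 0}$ is a martingale, giving the displayed canonical decomposition.
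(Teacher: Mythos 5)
Your overall architecture (series representation of the Poissonian part, identification of the jumps of $\X$ along the atoms, reduction of uniqueness to a continuous deterministic drift) is the right one and matches the paper's strategy, but the necessity direction has a genuine gap at its most critical point: you never actually prove that the integral \eqref{M} exists. In the ``Construction'' paragraph you assert that condition \eqref{i2} for the diagonal integrand $(s,v)\mapsto\phi(s,s,v)\1_{(0,t]}(s)$ ``follows from the $K$-integrability already inherent in the existence of the original integral,'' dominating the diagonal by a running supremum. This is false: existence of \eqref{def_Y_phi} gives $\int K(\phi(t,s,v),(s,v))\,\kappa(ds,dv)<\infty$ for each \emph{fixed} $t$, which controls neither $\sup_{t'\le t}|\phi(t',s,v)|$ nor the diagonal values; if it did, every well-defined $\X$ satisfying \eqref{drift} would be a semimartingale, contradicting the content of the theorem (and of Proposition~\ref{fLp}). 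The paper obtains existence of $\M$ only \emph{from} the semimartingale hypothesis: finite quadratic variation of $\X$ gives $\sum_{0<s\le t}(\Delta X_s)^2<\infty$, the jump identification $\Delta X_{T_i^1}=R_i\phi(T_i^1,T_i)$ turns this into a.s.\ square-summability of the series terms, and Rosi\'nski's convergence criterion converts that into $\int_{(0,t]\times V}\int_\R \min\{|x\phi(s,s,v)|^2,1\}\,\rho_{(s,v)}(dx)\,\kappa(ds,dv)<\infty$, which together with \eqref{drift} is exactly \eqref{i1}--\eqref{i2} for $\M$. That chain of reasoning is absent from your proposal.

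A second substantive gap is the identification of the martingale component. Since $\X$ need not be special, one cannot argue directly through ``the predictable compensator of the jump measure''; the paper truncates the large jumps, shows the truncated process is special, and then identifies its martingale part with the truncation of $\M$ by a jump-by-jump comparison. That comparison requires knowing that \emph{every} local martingale in $\Fi^{\Lambda}$ is purely discontinuous with jumps supported in a fixed sequence of totally inaccessible stopping times contained in $\{T^1_k\}$ (the martingale representation property of the Poisson measure of $\Lambda$, Step~4 of the paper), after which two purely discontinuous local martingales with identical jumps coincide. Your sketch skips this, and consequently never establishes that $\A:=\X-X_0-\M$ is of finite variation, which is the other half of the conclusion. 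Two smaller issues: the general case must be reduced to the pure Gaussian and pure Poissonian cases (the paper uses that $(\Lambda,\X)$ and $(\Lambda_P-\Lambda_G,\X^P-\X^G)$ are equal in law to show $\X^G$ and $\X^P$ are separately semimartingales), which you do not address; and in the uniqueness argument $\M'-\M$ need not itself be a PII process (marginal independence of increments of $\M$ and $\M'$ from the past does not give joint independence), so one should argue as the paper does, via quasi-left continuity of $\M-\M'$ combined with predictability of $\A'-\A$.
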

\bigskip

\begin{remark}
{ \rm
Condition~\eqref{drift} is always satisfied when $\Lambda$ is symmetric. Indeed, in this case $B=0$. 
}
\end{remark}

\begin{remark}
{ \rm
Stricker's theorem \cite{Stricker_gl} says that Gaussian semimartingales are those who admit a decomposition into a Gaussian martingale with independent increments and a predictable process of finite variation. Our theorem is an exact extension of this result to the infinitely divisible case.
}
\end{remark}

\begin{remark}
{ \rm
There is a slight inconsistency in the notations used in \eqref{decomp-X} and \eqref{dec}. 
In \eqref{dec} $\M$ is a semimartingale with independent increments, which is  a martingale when centered, in which case \eqref{decomp-X} and \eqref{dec} coincide. However, if $\M$ does not have zero expectation, then further decomposition of $\M$ into a martingale and a process of finite variation is needed to obtain \eqref{decomp-X}, but this is standard, see e.g.\ \cite[Theorem~19.2]{Sato}. 
}
\end{remark}

\begin{example}
Consider the setting in Theorem~\ref{thm1} and suppose that  $\Lambda$ is an  $\alpha$-stable random measure and $\alpha\in (0,1)$.  Then $\bf X$ is a semimartingale with respect to $\Fi^\Lambda$ if and only if it is  of finite variation. This follows by Theorem~\ref{thm1} because the process $\bf M$ given by \eqref{M-levy} is of finite variation. 

Indeed,    the  L\'evy--It\^o decomposition, see  \cite[Theorem~19.3]{Sato}, yields that  $\M=\bar \M+\mathbf{a}$, where $\bf a$ is a continuous deterministic function and process $\bar \M$ is of finite variation, see \cite[Eq.~(20.24)]{Sato}. Moreover since $\M$ is a semimartingale, the function $\bf a$ is of finite variation, cf.\ \cite[Ch.~II, Corollary~5.11]{Jacod_S}, which implies that $\M$ is of finite variation. 
%
%
%
%
\end{example}

\medskip

\begin{proof}[Proof of Theorem~\ref{thm1}]

The sufficiency is obvious. To show the necessary part we need to show that a semimartingale $\X$ has a decomposition \eqref{dec} where the processes $\bf M$ and $\bf A$ have the stated properties. We will start by considering the case where $\Lambda$ does not have a Gaussian component, i.e.\ $\sigma^2=0$.

\emph{Case 1. $\Lambda$ has no Gaussian component}:    We  divide the proof  into the following six steps. 

\emph{Step~1}: 
Let $X^0_t = X_t - \beta(t)$, with 
\begin{equation}
 \beta(t) = \int_{U} B\big(\phi(t, u), u\big) \, \kappa(d u). 
\end{equation}
We will give the  series representation for $\X^0$ that will be crucial for our considerations. To this end, define for $s\neq 0$ and $u \in U=\R\times V$ 
\begin{equation}\label{}
R(s, u) = \begin{cases}
\inf\{ x>0: \rho_u(x,\infty) \le s\} \qquad\qquad   & \text{if } s>0, \\ 
\sup\{ x<0: \rho_u(-\infty, x) \le -s\}  & \text{if } s<0. 
\end{cases}
\end{equation}
Choose a probability measure $\tilde \kappa$ on $U$ equivalent to $\kappa$, and let $h(u)= \frac{1}{2}(d \tilde \kappa/d\kappa)(u)$.
By an extension of  our probability space if necessary, \citet{Rosinski_series_point}, Proposition~2 and Theorem~4.1, shows that there exists three independent sequences
$(\Gamma_i)_{i\in \N}$,  $(\epsilon_i)_{i\in \N}$,   and $(T_i)_{i\in \N}$, where $\Gamma_i$ are partial sums of i.i.d.\ standard exponential random variables, $\epsilon_i$ are i.i.d.\ symmetric Bernoulli random variables, and $T_i=(T_i^1, T_i^2)$ are i.i.d.\ random variables in $U$ with the common distribution $\tilde \kappa$, such that for every $A\in \s$, 
\begin{equation}\label{rep_lambda}
 \Lambda(A)= \nu_0(A)+  \sum_{j=1}^\infty   [R_j\1_A(T_j)-\nu_j(A)\Big] \qquad \text{a.s.}
\end{equation}
where $R_j=R(\epsilon_j\Gamma_j h(T_j),T_j)$, \ $\nu_0(A)= \int_A b(u) \, \kappa(d u)$,  and for $j\ge 1$
\begin{equation}\label{}
\nu_j(A) = \int_{\Gamma_{j-1}}^{\Gamma_j} \E \lt R(\epsilon_1 r h(T_1),T_1)\rt \1_A(T_1) \, dr.
\end{equation}
It follows by the same argument that 
\begin{equation} \label{sY0}
X^0_t = \sum_{j=1}^{\infty} \big[ R_j \phi(t, T_j) - \alpha_j(t) \big] \qquad \text{a.s.},
\end{equation}
where 
\begin{equation} \label{}
\alpha_j(t) = \int_{\Gamma_{j-1}}^{\Gamma_j} \E \lt R(\epsilon_1 r h(T_1),T_1) \phi(t, T_1)\rt  \, dr.
\end{equation}

\emph{Step~2}: We will show that  for every $i \in \N$
\begin{equation} \label{eq:T2}
\Delta X_{T_i^1} = R_i \phi(T_i^1, T_i) \quad \text{a.s.}
\end{equation}
 Since $\X$ is \ca,  the series 
 \begin{equation} \label{eq:Yep}
X_t^0=\sum_{j=1}^{\infty} \big[ R_i \phi(t, T_i) - \alpha_i(t) \big] 
\end{equation} 
converges uniformly for $t$ in compact intervals a.s., cf.\ \citet[Corollary~3.2]{Ito-Nisio-D}. 
Moreover, $\beta$ is \ca, see \cite[Lemma~3.5]{Ito-Nisio-D},  and by Lebesgue's dominated convergence theorem it follows that 
$\alpha_j$, for $j\in \N$, are \ca\ as well. Therefore, with probability one,
\begin{equation} \label{}
\Delta X_t = \Delta \beta(t) + \sum_{j=1}^{\infty} \big[ R_j \Delta \phi(t, T_j) - \Delta \alpha_j(t) \big] \quad \text{for all} \ t>0.
\end{equation}
Hence, for every $i \in \N$ almost surely
\begin{equation} \label{eq:T1}
\Delta X_{T_i^1} = \Delta \beta(T_i^1) + \sum_{j=1}^{\infty} \big[ R_j \Delta \phi(T_i^1, T_j) - \Delta \alpha_j(T_i^1) \big] = 
\sum_{j=1}^{\infty} R_j \Delta \phi(T_i^1, T_j) 
\end{equation}
Indeed, by assumption \eqref{Lambda2} the distribution of $T_i^1$ is continuous. Since $\beta$ may  have only a countable number of discontinuities, with probability one $T_i^1$ is a continuity point of $\beta$. Hence $\Delta \beta(T_i^1)=0$ a.s. Since $(\Gamma_j)_{j\in \N}$ are independent of $T_i^1$, the argument used for $\beta$ also yields $\Delta \alpha_j(T_i^1)=0$ a.s. 
This proves \eqref{eq:T1}.  

Furthermore, for $i\ne j$ we get
\begin{align} \label{}
\P( \Delta \phi(T_i^1, T_j) \ne 0) &= \int_{U} \P( \Delta \phi(T_i^1, T_j) \ne 0\, |\, T_j= u) \, \tilde \kappa(d u) \\
&= \int_{U} \P( \Delta \phi(T_i^1, u) \ne 0 ) \, \tilde \kappa(d u) = 0
\end{align}
again because $\phi(\cdot, u)$ may have only countably many jumps and the distribution of $T_i^1$ is continuous. 
If $j=i$ then
\begin{equation} \label{}
\Delta \phi(T_i^1, T_i) = \lim_{h \downarrow 0,\, h>0} \big[\phi(T_i^1, T_i^1, T_i^2) - \phi(T_i^1-h, T_i^1, T_i^2) \big] = \phi(T_i^1, T_i) 
\end{equation}
as $\phi(t,s,v)=0$ whenever $t<s$. This simplifies \eqref{eq:T1} to \eqref{eq:T2}.

\emph{Step~3}: Next we will show that 
$\M$, defined  in \eqref{M}, is a well-defined \ca\ process satisfying
\begin{equation}\label{eq-MY}
 \Delta M_{T^1_i} =\Delta X_{T^1_i}\quad \text{a.s.\ for all }i\in \N.
\end{equation} 
Since any semimartingale has finite quadratic variation, we get with probability one
\begin{align*}
\infty & > \sum_{0<s\le t} \big(\Delta X_s\big)^2 \ge \sum_{0<T^1_i\le t} \big(\Delta X_{T_i^1}\big)^2 = 
\sum_{i = 1}^{\infty} \big[R_i \phi(T_i^1, T_i)\big]^2 \1_{\{0<T_i^1 \le t\}},
\end{align*}
where the last equation employs \eqref{eq:T2}. Put for $t,r \ge 0$ and $(\epsilon,s,v) \in \{-1,1\} \times \R \times V$
\begin{equation} \label{}
H(t; r, (\epsilon,s,v)) = R(\epsilon r h(s,v), (s,v)) \phi(s,s,v) \1_{\{0< s \le t\}}.
\end{equation}
The above bound shows that for each $t\ge 0$
\begin{equation} \label{}
\sum_{i=1}^{\infty}  |H(t; \Gamma_i, (\epsilon_i,T_i^1,T_i^2)) |^2 < \infty \quad \text{a.s.}
\end{equation}
That implies, by  \citet[Theorem 4.1]{Rosinski_series_point}, that  the following limit is finite
\begin{align*}
\lim_{n\to \infty} \int_0^n \E \lt H(t; r, (\epsilon_1,T_1^1,T_1^2))^2 \rt \, dr = \int_0^{\infty} \E \lt H(t; r, (\epsilon_1,T_1^1,T_1^2))^2 \rt \, dr.
\end{align*}
Evaluating this limit we get
\begin{align*}
\infty &> \int_0^{\infty} \E \lt R(\epsilon_1 r h(T_1), T_1) \phi(T_i^1, T_i) \1_{\{0<T_i^1 \le t\}}\rt^2 \, dr \\
& = \int_0^{\infty} \int_{\R\times V} \E\lt R(\epsilon_1 r h(s,v), (s,v)) \phi(s, s,v) \1_{\{0< s \le t\}}\rt^2 \, \tilde\kappa(ds,dv) dr \\
& = 2 \int_0^{\infty} \int_{\R\times V} \E\lt R(\epsilon_1 u, (s,v)) \phi(s, s,v) \1_{\{0< s \le t\}}\rt^2 \, \kappa(ds,dv) du \\
& = \int_{\R\times V} \int_{\R} \lt x \phi(s, s,v) \1_{\{0< s \le t\}}\rt^2 \, \rho_{(s,v)}(dx) \kappa(ds,dv) \\
& = \int_{(0,t] \times V} \int_{\R} \min\{ |x \phi(s, s,v)|^2, 1\} \, \rho_{(s,v)}(dx) \kappa(ds,dv). 
\end{align*}
Finiteness of this integral in conjunction with \eqref{drift} yield the existence of the stochastic integral
\begin{equation} \label{}
M_t = \int_{(0,t] \times V} \phi(s, s,v) \, \Lambda(ds,dv). 
\end{equation}
The fact that $\M$ has independent increments is obvious since $\Lambda$ is independently scattered, and its continuity in probability is a consequence of \eqref{Lambda2}.  We may choose a \ca\ modification of $\M$, cf.\ \cite[Theorem~11.5]{Sato}. Due to the independent increments,  $\M$ is a semimartingale if and only if its shift component $(\zeta_t)_{t\geq 0}$ is of finite variation, cf.\ \cite[Ch.~II, Corollary~5.11]{Jacod_S}, which follows from  \eqref{drift} and the fact that 
\begin{equation}\label{def-a}
 \zeta_t=\int_{(0,t]\times V} B\big(\phi(s,s,v),(s,v)\big)\,\kappa(ds,dv),\qquad t\geq 0,
\end{equation}
see \cite[Theorem~2.7]{Rosinski_spec}.
For $t\geq 0$ we can  write $M_t$ as a series using the series representation \eqref{rep_lambda} of $\Lambda$. It follows that
\begin{equation} \label{se-M}
M_t =\zeta_t+ \sum_{i=1}^{\infty} \big[ R_i \phi(T_i^1, T_i) \1_{\{0<T_i^1 \le t\}} - \gamma_j(t)\big]
\end{equation}
where 
\begin{equation}\label{}
 \gamma_j(t)=\int_{\Gamma_{j-1}}^{\Gamma_j} \E \lt R(\epsilon_1 r h(T_1),T_1) \phi(T^1_1,T_1)\1_{\{ 0<T^1_j\leq t\}})\rt  \, dr. 
\end{equation}
The assumption \eqref{Lambda2} yields that  $\zeta$ and $\gamma_j$ are continuous. Moreover, by arguments as above we have $\Delta M_{T^1_i}= R_i\phi(T^1_i,T_i)$ a.s.\ and hence by \eqref{eq:T2} we obtain \eqref{eq-MY}. 

\emph{Step~4}:  In the following we will show the existence of  a sequence $(\tau_k)_{k\in \N}$ of totally inaccessible stopping times such that  all local martingales $\mathbf{Z}=(Z_t)_{t\geq 0}$  with respect to $\Fi^\Lambda$ are purely discontinuous and with probability one 
\begin{equation}\label{con-stop}
\{t\geq 0: \Delta Z_t\neq 0\}\subseteq \{\tau_k:k\in \N\}\subseteq \{T^1_k:k\in \N\}.
\end{equation}
To show the above choose a sequence $(B_k)_{k\geq 1}$ of disjoint sets which generates $\mathcal V_0$ and for all  $k\in \N$ let ${\bf U}^k=(U^k_t)_{t\geq 0}$ be given by 
\begin{equation}\label{}
  U^k_t=\Lambda((0,t]\times B_k).
\end{equation}
 Assumption \eqref{Lambda2} implies that ${\bf U}^k$ is a continuous in probability process with independent increments and has therefore a \ca\ modification.  Hence   ${\bf U}=\{(U_t^k)_{k\in\N}:t\in \R_+\}$ is a  continuous in probability \ca\ $\R^\N$-valued process with no Gaussian component. Let $E=\R^\N\setminus\{0\}$. Then $E$ is a Blackwell space and  $\mu$ defined  by 
   \begin{equation}\label{}
 \mu(A)=\sharp\big\{t\in\R_+:(t,\Delta U_t)\in A\big\},\qquad A\in \mathscr{B}(\R_+\times E)
\end{equation} 
 is a Poisson random measure on $\R_+\times E$. Let $\nu$ be the  intensity measure of $\mu$. By assumption \eqref{Lambda2} we have that $\nu(\{t\}\times E)=0$ for all $t\geq 0$, moreover, $\Fi^\Lambda$ is the least filtration for which $\mu$ is an optional random measure. Thus according to   \cite{Jacod_S}, Ch.~III, Theorem~1.14(b) and the remark after Ch.~III, 4.35,  $\mu$ has the martingale representation property, that is for all real-valued local martingales ${\bf Z}=(Z_t)_{t\geq 0}$ with respect to $\Fi^\Lambda$ there  exists  a predictable function  $\phi$ from $\Omega\times \R_+\times E$ into $\R$ such that 
 \begin{equation}\label{mar-rep-N}
  Z_t = \phi*(\mu-\nu)_t,\quad t\geq 0 
 \end{equation} 
 (in \eqref{mar-rep-N} the symbol $*$ denotes integration with respect to $\mu-\nu$ as in \cite{Jacod_S}). 
 Thus by definition, see \cite[Ch.~II, Definition~1.27(b)]{Jacod_S},   $\bf Z$ is a purely discontinuous local martingale and $(\Delta Z_t)_{t\geq 0}=(\phi(t,\Delta U_t)\1_{\{\Delta U_t\neq 0\}})_{t\geq 0}$, which shows that with probability one
 \begin{equation}\label{}
 \{t\geq 0: \Delta Z_t\neq 0\}\subseteq \{t\geq 0: \Delta U_t\neq 0\}.
 \end{equation}

 All real-valued continuous in probability \ca\  processes ${\bf Y}=(Y_t)_{t\geq 0}$  with independent increments  is quasi-left continuous cf.\ \cite[Ch.~II, Corollary~5.12]{Jacod_S}, and hence there exists a sequence of totally inaccessible stopping times  that exhausts  the jumps of $\bf Y$, cf.\ \cite[Ch.~I, Proposition~2.2]{Jacod_S}.  
  Thus by a diagonal argument we may exhausts the jumps of $\bf U$ by a sequence of totally inaccessible stopping times $(\tau_k)_{k\in \N}$, that is  
    \begin{equation}\label{ex_sw}
 \{\tau_k:k\in \N\}= \{t\geq 0: \Delta U_t\neq 0\}.
\end{equation}  
Arguing as in Step~2  with $\phi(t,s,v)=\1_{(0, t]}(s)\1_{ B_k}(v)$ shows that  with probability one
\begin{equation}\label{jumps-U-12}
  \Delta U^k_t
 = \Delta \zeta(t) + \sum_{j=1}^{\infty} \big[ R_j  \1_{\{t=T^1_j\}} \1_{\{T^2_j\in B_k\}}- \Delta \gamma_j(t) \big] \quad \text{for all} \ t>0
\end{equation} 
where 
\begin{align}\label{}
 \xi(t)={}&  \int_{\R\times V} \1_{\{0\leq s\leq t\}}\1_{\{v\in B_k\}}b(s,v)\, \kappa(ds,dv),\\ 
 \gamma_j(t)={}&  \int_{\Gamma_{j-1}}^{\Gamma_j} \E \lt R(\epsilon_1 r h(T_1),T_1) \1_{\{ T^1_j\leq t\}}\1_{\{T^2_j\in B_k\}})\rt  \, dr.
\end{align}
By assumption \eqref{Lambda2}, $\xi$ and $\gamma_j$ are continuous and hence with probability one
\begin{equation}\label{}
 \Delta U^k_t= \sum_{j=1}^{\infty} R_j  \1_{\{t=T^1_j\}} \1_{\{T^2_j\in B_k\}} \qquad \text{for all }t>0.
\end{equation}
which shows that 
\begin{equation}\label{sub-T}
  \{\tau_k: k\in \N\}\subseteq \{T^1_k: k\in \N\}\qquad \text{a.s.} 
\end{equation}
and completes the proof of Step~4. 

\emph{Step~5}:
Fix $r\in \N$ and let  $\X'=(X'_t)_{t\geq 0}$ be given by 
\begin{equation}\label{}
 X' _t=X_t-\sum_{s\in (0,t]} \Delta X_s\1_{\{\abs{\Delta X_s}>r\}}.
\end{equation}
We will show that $\X'$ is a special semimartingale with martingale component $\M'=(M'_t)_{t\geq 0}$ given by 
\begin{equation}\label{def-M'}
M_t'=\tilde M_t-\E \tilde M_t \quad \text{where}\quad \tilde M_t=M_t-\sum_{s\in (0,t]}\Delta M_s \1_{\{\abs{M_s}>r\}}.
\end{equation}
 Recall that $\bf M$ is given by \eqref{M}.
To show above  we note that    $\X'$ is a special semimartingale since its jumps are bounded by $r$ in absolute value; denote by $\bf W$ and $\bf N$ the finite variation and martingale compnents, respectively,  in the canonical decomposition $\X'=X_0+\mathbf{W}+\mathbf{N}$ of $\X'$. That is, we want to show  that $\mathbf{N}= \M'$. Process $\M'$, given by \eqref{def-M'}, is obviously a martingale and by \eqref{eq-MY} we have   for all $i\in \N$ 
  \begin{equation}\label{eq-M'}
    \Delta M'_{T_i^1}=\Delta M_{T_i^1}\1_{\{\abs{\Delta M_{T_i^1}}\leq r\}}=\Delta X_{T_i^1}\1_{\{\abs{\Delta X_{T_i^1}}\leq r\}}=\Delta X'_{T_i^1} \qquad \text{a.s.}
  \end{equation}
 Since $\textbf W$ is predictable and $\tau_k$ is a totally inaccessible stopping time we have that $\Delta W_{\tau_k}=0$ a.s.\ cf.\ \cite[Ch.~I, Proposition~2.24]{Jacod_S} and hence 
   \begin{equation}\label{eq_N_j}
 \Delta N_{\tau_k}=\Delta X_{\tau_k}'-\Delta W_{\tau_k}=\Delta X_{\tau_k}'=\Delta M_{\tau_k}'\qquad \text{a.s.}
 \end{equation}
 the last equality follows by \eqref{eq-M'} and the second inclusion in \eqref{con-stop}. Since  $\bf N$ and $\M'$ are local martingales they are in fact  purely discontinuous local martingale, cf.\ Step~3,  and   with probability one 
\begin{equation}\label{}
  \{t\geq 0:\Delta N_t\neq 0\}\subseteq \{\tau_k: k\in \N\},\quad  \{t\geq 0:\Delta M'_t\neq 0\} \subseteq \{\tau_k: k\in \N\}.
\end{equation}
According to   \eqref{eq_N_j} this shows that $(\Delta N_t)_{t\geq 0}=(\Delta M'_t)_{t\geq 0}$, and we conclude that ${\bf N}=\M'$ since  $\bf N$ and $\M'$ are purely discontinuous local martingales, see \cite[Ch.~I, Corollary~4.19]{Jacod_S}. 
This completes Step~4. 

\emph{Step~6}: We will show that   $\A$, given by \eqref{A}, is a predictable \ca\ process of finite variation. By linearity,   $\A$  is a well-defined \ca\ process.  According to Step~5 the process  $\textbf{W}:=\X'-X_0-\M'$ is predictable and has \ca\ paths of finite variation.
Thus with $\mathbf{V}=(V_t)_{t\geq 0}$  given by 
  \begin{equation}
V_t= \sum_{s\in (0,t]} \Delta X_s\1_{\{\abs{X_s}>r\}}-\sum_{s\in (0,t]} \Delta M_s\1_{\{\abs{M_s}>r\}}
  \end{equation}
we have by the definitions of $\bf W$ and $\bf V$   that 
\begin{equation}\label{decomp-A-67}
  A_t= X_t-X_0-M_t=W_t+V_t-\E \tilde M_t.
  \end{equation}
  %
This shows that $\A$ has   \ca\ sample paths of finite variation. Next we will show that $\A$ is predictable. Since the processes $\bf W$, $\bf V$ and $\tilde \M$ depend on the truncation level $r$ they will be denoted  $\mathbf{W}^r$, $\mathbf{V}^{r}$ and $\tilde \M^r$ in the following.  As $r\to \infty$, $V_t^r(\omega)\to 0$ point wise in $(\omega,t)$, which by \eqref{decomp-A-67} shows that $W^r_t(\omega)-\E \tilde M^r_t \to A_t(\omega)$  point wise in $(\omega,t)$ as $r\to \infty$. For all $r\in \N$,    $(W^r_t-\E \tilde M^r_t)_{t\geq 0}$ is a predictable process, which implies  that  $\bf A$ is a point wise limit of predictable processes and hence predictable. This completes the proof of Step~6.

%
\emph{Case 2. $\Lambda$ is Gaussian}:
Assume that $\Lambda$ is a symmetric  Gaussian random measure. By \citet[Theorem~4.6]{Andreas2} used on $C_t=(-\infty,t]\times V$,  $\X$ is a special semimartingale in $\Fi^{\Lambda}$ with martingale component $\M=(M_t)_{t\geq 0}$ given by 
   \begin{equation}\label{}
 M_t=\int_{(0,t]\times V} \phi(s,s,v)\,\Lambda(ds,dv),\qquad t\geq 0,
\end{equation}
see \cite[Equation~(4.11)]{Andreas2}, which completes the proof in the   Gaussian case. 

\emph{Case 3. $\Lambda$ is general}:
Let us observe that it is enough to show the theorem in the above two cases. We may decompose $\Lambda$ as $\Lambda = \Lambda_G + \Lambda_P$, where $\Lambda_G, \Lambda_P$ are independent, independently scattered random measures. $\Lambda_G$ is a symmetric Gaussian random measure characterized by \eqref{Lambda} with $b \equiv 0$ and $\kappa \equiv 0$ while $\Lambda_P$ is given by \eqref{Lambda} with $\sigma^2 \equiv 0$. Observe that 
\begin{equation} \label{eq-fil}
\Fi^{\Lambda} = \Fi^{\Lambda_G} \vee \Fi^{\Lambda_P},
\end{equation}
which can be deduced  from the L\'evy-It\^o decomposition used processes $\Y=(Y_t)_{t\geq 0}$ of the form $Y_t=\Lambda((0,t]\times B)$ where  $B\in \V_0$.  We  have $\X = \X^G + \X^P$, where $\X^G$ and $\X^P$ are defined by \eqref{def-X} with $\Lambda_G$ and $\Lambda_P$ in the place of $\Lambda$, respectively. Since $(\Lambda, \X)$ and $(\Lambda_P -\Lambda_G, \X^P- \X^G)$ have the same distributions,
the process $\X^P -\X^G$ is a semimartingale with respect to $\Fi^{\Lambda_P-\Lambda_G}= \Fi^{\Lambda_P} \vee \Fi^{-\Lambda_G}= \Fi^{\Lambda}$. 
Consequently, processes $\X^G$ and $\X^P$ are semimartingales with respect to $\Fi^{\Lambda}$, and so, they are semimartingales relative to $\Fi^{\Lambda_G}$ and $\Fi^{\Lambda_P}$, respectively, and the general result follows from the above two  cases. 

\emph{The uniqueness part}: Assume that $\X$ has a representation of the form   $\X=X_0+\M'+\A'$ where $\M'$ is a continuous in probability semimartingale with independent increments and $\A'$ is a \ca\ predictable process of finite variation.
With  $\M$ and $\A$  given by \eqref{M} and \eqref{A} we need to show that  process  $\mathbf{Y}$, defined by 
\begin{equation}\label{rep-Y-u}
\mathbf{Y}=\M-\M'=\A'-\A,
\end{equation} 
is a continuous deterministic function of finite variation. The first equality in \eqref{rep-Y-u} shows that $\bf Y$ is  quasi-left continuous, cf.\ \cite[Ch.~II, Corollary~4.18]{Jacod_S},  and  the second equality shows that $\bf Y$ is predictable, which together implies that $\bf Y$ is continuous, use \cite[Ch.~I, Proposition~2.24+Definition~2.25]{Jacod_S}. By the L\'evy-It\^o decompositions of $\M$ and $\M'$ it follows that  $\Y=\mathbf{U}+f$ where $\bf U$ is a continuous martingale and $f$ is a continuous deterministic function of finite variation; that $f$ is of finite variation follows by \cite[Ch.~II, Corollary~5.11]{Jacod_S}. Since $\bf Y$ is of finite variation we deduce that $\mathbf{U}=0$, that is,  $\mathbf{Y}=f$.  This  completes the proof of the uniqueness.  

\emph{The special semimartingale part}:
To prove the  part concerning the special semimartingale property of $\X$ we note that the process $\bf A$ in \eqref{A} is a special semimartingale since it is predictable and of finite variation. Thus $\bf X$ is a special semimartingale if and only if $\bf M$ is special semimartingale. Due to its independent increments,  $\bf M$ is a special semimartingale if and only if $\E\abs{M_t}<\infty$ for all $t>0$, cf.\  \cite[Ch.~II, Proposition~2.29(a)]{Jacod_S}, and in that case $M_t=(M_t-\E M_t)+\E M_t$ is the  canonical decomposition of $\bf M$. This completes the proof.
\end{proof}

\medskip

\begin{remark}\label{}
{ \rm
We conclude this section by recalling that the proof of any of the results on Gaussian semimartingales $\X$ mentioned in the Introduction relies on the approximations of the finite variation component $\A$ by discrete time Doob--Meyer decompositions $\A^n=(A^n_t)_{t\geq 0}$ given by 
\begin{equation}
A^n_t=\sum_{i=1}^{[2^n t]} \E[X_{i2^{-n}}-X_{(i-1)2^{-n}}|\F_{(i-1)2^{-n}}],
\qquad t\geq 0
\end{equation}
and showing that the convergence $\lim_n A^n_t=A_t$ holds in an appropriate sense, see \cite{Meyer_app}.
This technique does not  seem  effective in the non-Gaussian situation since it relies on strong integrability properties of  functionals of $\X$.  
}
\end{remark}

\section{Some stationary increment semimartingales} \label{appl}


In this section we consider infinitely divisible  processes which are stationary increment mixed moving averages (SIMMA).
Specifically, a process $\X=\p X$ is called a SIMMA process if it can be written in the form
\begin{equation} \label{eq:mma}
X_t=\int_{\R \times V} \big[f(t-s, v)- f_0(-s, v)\big]\, \Lambda(ds, dv),\qquad t\geq 0,
\end{equation} 
where the functions $f$ and $f_0$ are deterministic measurable such that  $f(s, v) = f_0(s,v) = 0$ whenever $s<0$, and $f(\cdot,v)$ is \ca\ for all $v$.  $\Lambda$ is an independently scattered infinitely divisible random measure that is invariant under translations over $\R$. If $V$ is a one-point space (or simply, there is no $v$-component in \eqref{eq:mma}) and $f_0=0$, then \eqref{eq:mma} defines a moving average (a mixed moving average for a general $V$, cf. \cite{mixed_ma_R}). If $V$ is a one-point space  
and $f_0(x)=f(x)=x_+^\alpha$ for some $\alpha\in \R$,  then $\X$ is  a fractional L\'evy process.

The finite variation property of SIMMA processes was investigated in \citet{fv-mma} and these results, together with Theorem \ref{thm1}, are crucial in our description of SIMMA semimartingales.

The random measure $\Lambda$ in \eqref{eq:mma} is as in \eqref{Lambda} but the functions $b$ and $\sigma^2$ do not depend on $s$ and the measure $\kappa$ is a product measure: $\kappa(ds,dv)=ds\,m(dv)$ for some $\sigma$-finite measure $m$ on $V$.  In this case, for $A\in \s$ and $\theta\in \R$  
\begin{align}\label{eq:M}
&\log \mathbb{E} e^{i\theta \Lambda(A)} \\
& \quad = \int_A \Big( i\theta b(v)-\frac{1}{2}\theta^2 \sigma^2(v) +\int_{\R} (e^{i\theta x}-1-iu\lt x\rt) \, \rho_v(dx)\Big)\, ds\,m(dv).
\nonumber
\end{align}
The function $B$ in \eqref{B} is independent of $s$, so that with $B(x,v)=B(x,(s,v))$ we have
\begin{equation}\label{B1}
   B(x, v) = xb(v) +  \int_{\R} \big( \lt xy\rt - x \lt y\rt\big) \, \rho_{v}(dy), \quad x \in \R, \ v \in V.
\end{equation}
Notice that $\Lambda$ satisfies \eqref{Lambda2} since  $\kappa(ds,dv)=ds\,m(dv)$. 

The SIMMA process \eqref{eq:mma} is a special case of \eqref{def-X} if we take $\phi(t,s,v)=f(t-s, v)- f_0(-s, v)$. Therefore, from Theorem \ref{thm1} we obtain:

\begin{theorem}\label{decomp-special}
Let $\mathbf{X}=\p X$ be of the form 
\begin{equation}\label{def_Y_phi-levy} 
X_t=\int_{\R \times V} \big[f(t-s, v)- f_0(-s, v)\big]\, \Lambda(ds, dv),\qquad t\geq 0,
\end{equation}
specified by \eqref{eq:mma}--\eqref{eq:M}, and let $B$ be given by \eqref{B1}. Assume that  
\begin{equation} \label{drift-4}
\int_{V} \big| B\big(f(0, v), v\big) \big| \, m(dv) < \infty.
\end{equation}
Then  $\mathbf{X}$ is a semimartingale with respect to the filtration 
$\mathbb{F}^{\Lambda}=(\mathcal{F}^{\Lambda}_t)_{t \ge 0}$ if and only if
\begin{equation}\label{dec-levy}
   X_t = X_0 + M_t + A_t, \quad t \ge 0,
\end{equation}
where $\M= \p M$ is a  L\'evy process given by  
\begin{equation} \label{M-levy}
M_t = \int_{(0,t] \times V} f(0,v)\,\Lambda(ds,dv), \quad t\ge 0, 
\end{equation}
(i.e., the integral exists), 
and $\mathbf{A}= \p A$ is a predictable process of finite variation  given by 
\begin{equation} \label{A-levy}
A_t = \int_{\R \times V} [g(t-s,v) -  g(-s,v)] \,\Lambda(ds,dv)
\end{equation}
where $g(s, v)= f(s, v) - f(0, v) \1_{\{s\geq 0\}}$. 
%
\end{theorem}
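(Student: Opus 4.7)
The plan is to apply Theorem~\ref{thm1} with $\phi(t,s,v):=f(t-s,v)-f_0(-s,v)$, which casts \eqref{eq:mma} as a special case of \eqref{def_Y_phi}. I first verify the hypotheses of that theorem: the adaptedness condition $\phi(t,s,v)=0$ for $s>t\ge 0$ holds because $f(t-s,v)=0$ when $t-s<0$ and $f_0(-s,v)=0$ when $-s<0$; c\`adl\`ag-ness of $\phi(\cdot,s,v)$ is inherited from $f(\cdot,v)$; and condition \eqref{Lambda2} is automatic because $\kappa(ds,dv)=ds\,m(dv)$ puts no mass on any $\{s\}\times V$. Since $\phi(s,s,v)=f(0,v)-f_0(-s,v)=f(0,v)$ for $s>0$, one has
\[
\int_{(0,t]\times V}\abs{B(\phi(s,s,v),(s,v))}\,\kappa(ds,dv)=t\int_V\abs{B(f(0,v),v)}\,m(dv),
\]
so the SIMMA drift hypothesis \eqref{drift-4} is exactly \eqref{drift}.

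Theorem~\ref{thm1} then yields a decomposition $X_t=X_0+M_t+\tilde A_t$ in which formula \eqref{M} gives $M_t=\int_{(0,t]\times V}f(0,v)\,\Lambda(ds,dv)$, proving \eqref{M-levy}. Because the integrand depends only on $v$ and $\Lambda$ is translation-invariant in $s$, $M$ has stationary and independent increments; continuity in probability is guaranteed by \eqref{Lambda2}, and a c\`adl\`ag modification is then a L\'evy process. Using right-continuity of $f(\cdot,v)$ I compute $\phi(s_+,s,v)=f(0,v)-f_0(-s,v)$, so \eqref{A} becomes
\[
\tilde A_t=\int_{(-\infty,t]\times V}[f(t-s,v)-f(0,v)]\,\Lambda(ds,dv).
\]

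Because $\tilde A_0$ does not vanish in general, I replace $\tilde A$ by $A_t:=\tilde A_t-\tilde A_0$, which remains predictable, c\`adl\`ag and of finite variation, satisfies $A_0=0$, and makes the decomposition read $X_t=X_0+M_t+A_t$ with the literal initial value $X_0$. A case analysis on the signs of $s$ and $t-s$, using $g(s,v)=f(s,v)-f(0,v)\1_{\{s\ge 0\}}$, then shows directly that $A_t=\int_{\R\times V}[g(t-s,v)-g(-s,v)]\,\Lambda(ds,dv)$, matching \eqref{A-levy}. The converse direction is automatic, since the stated decomposition is a semimartingale decomposition. The only non-routine step is this final translation of $\tilde A-\tilde A_0$ into the symmetric $g$-form, which is the form natural for stationary increments.
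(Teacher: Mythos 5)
Your proof is correct and follows the same route as the paper, which derives Theorem~\ref{decomp-special} directly as the special case $\phi(t,s,v)=f(t-s,v)-f_0(-s,v)$ of Theorem~\ref{thm1}; you merely spell out the verifications (adaptedness, \eqref{Lambda2}, the equivalence of \eqref{drift} and \eqref{drift-4}, stationarity of the increments of $\M$) that the paper leaves implicit. The recentering by $\tilde A_0$ only arises because you read $\phi(s_+,s,v)$ in \eqref{A} as a right limit rather than $\phi(\max(s,0),s,v)$; either reading leads to the same formula \eqref{A-levy}.
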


Now we will give specific and closely related necessary and sufficient 
conditions on $f$ and $\Lambda$ that make $\X$ a semimartingale.

 \begin{theorem}[Sufficiency]\label{thm-suf} 
Let $\mathbf{X}=\p X$ be specified by \eqref{eq:mma}--\eqref{eq:M}. Suppose that 
\eqref{drift-4} is satisfied 
and that for $m$-a.e.\ $v \in V$,  $f(\cdot,v)$ is absolutely  continuous on $[0,\infty)$ with a derivative 
$\dot  f(s,v)=\frac{\partial }{\partial s}f(s,v)$ satisfying
  \begin{align}
& \int_V \int_{0}^\infty\big(\abs{\dot f(s,v)}^2\sigma^2(v)\big)\,ds
 \,m(dv)<\infty,   \label{int_1} \\
 \label{Cf}
  &  \int_V \int_0^\infty \int_\R \big(\abs{x\dot f(s,v)}\wedge
   \abs{x\dot f(s,v)}^2\big)\,\rho_v(dx)\,ds\,m(dv)<\infty.
\end{align}
Then $\bf X$ is a semimartingale with respect to $\Fi^\Lambda$.  
\end{theorem}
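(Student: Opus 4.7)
The sufficient direction of Theorem~\ref{decomp-special} requires only that the processes $\M$ and $\A$ given in \eqref{M-levy} and \eqref{A-levy} exist and have the stated properties; their sum is then automatically a semimartingale. Hence the plan is to verify these existence and regularity claims under the hypotheses of Theorem~\ref{thm-suf}, and the entry point is a rewriting of $\A$ using absolute continuity. Extending $\dot f(r,v)=0$ for $r<0$, we have for every $s\in\R$ and $t\ge 0$
\begin{equation*}
g(t-s,v)-g(-s,v) = \int_0^t \dot f(u-s,v)\,du,
\end{equation*}
so that \eqref{A-levy} becomes a formally iterated integral over $du$ and $\Lambda(ds,dv)$.

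The core step is a stochastic Fubini argument, which yields
\begin{equation*}
A_t=\int_0^t Y_u\,du,\qquad Y_u:=\int_{\R\times V}\dot f(u-s,v)\,\Lambda(ds,dv).
\end{equation*}
After the change of variables $w=u-s$, the existence of $Y_u$ for a.e.\ $u$ is exactly the integrability criteria \eqref{i1}--\eqref{i2} applied to $\dot f$, which are guaranteed by \eqref{int_1} (Gaussian part) and \eqref{Cf} (jump part). The same integrability bounds provide the absolute control needed to interchange $du$ and $\Lambda(ds,dv)$; this is essentially the content of \citemma. It follows that $\A$ is absolutely continuous in $t$, hence \ca, of finite variation, and, being continuous, predictable.

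It remains to identify $\M$ and to confirm the decomposition. The algebraic identity
\begin{equation*}
f(t-s,v)-f(-s,v) = f(0,v)\,\1_{(0,t]}(s) + [g(t-s,v)-g(-s,v)]
\end{equation*}
combined with the existence of $X_t-X_0=\int[f(t-s,v)-f(-s,v)]\,\Lambda(ds,dv)$ and of $A_t$ forces, by linearity of the Rosi\'nski integral, the existence of $M_t=\int_{(0,t]\times V}f(0,v)\,\Lambda(ds,dv)$ together with $X_t=X_0+M_t+A_t$. Because $f(0,\cdot)$ is independent of $s$ and $\Lambda$ is translation invariant and independently scattered, $\M$ has stationary independent increments, and \eqref{drift-4} ensures its drift is finite, so $\M$ is a L\'evy process, and in particular a semimartingale. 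Together with the finite variation of $\A$, this gives the semimartingale property of $\X$ with respect to $\Fi^\Lambda$.

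The main technical obstacle is justifying the stochastic Fubini interchange without any a priori moment assumption on $\Lambda$. The asymmetric form $|x\dot f|\wedge|x\dot f|^2$ in \eqref{Cf} is exactly tailored for this: integrated against $\rho_v$ it simultaneously controls the small jumps (quadratically) and the large jumps (linearly) of $\Lambda$ after cumulation against $\int_0^t(\cdot)\,du$, producing a genuine absolutely integrable process $Y_u$ rather than merely a well-defined stochastic integral; \eqref{int_1} plays the analogous role for the Gaussian component. This is also the place where the hypotheses are stronger than mere existence of the integrals in \eqref{M-levy} and \eqref{A-levy}, and is what allows us to read off finite variation of $\A$ directly from the Fubini representation.
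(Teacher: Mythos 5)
Your overall plan --- verify that $\M$ and $\A$ of Theorem~\ref{decomp-special} exist with the stated properties --- is the right one, but the central step of your argument has a genuine gap. You claim that the existence of $Y_u=\int_{\R\times V}\dot f(u-s,v)\,\Lambda(ds,dv)$ is ``exactly'' conditions \eqref{i1}--\eqref{i2} applied to $\dot f$, and that these follow from \eqref{int_1} and \eqref{Cf}. Condition \eqref{i2} does follow, since $\lt z\rt^2=|z|^2\wedge 1\le |z|\wedge|z|^2$. But condition \eqref{i1} requires $\int_V\int_\R\big|B\big(\dot f(u-s,v),v\big)\big|\,ds\,m(dv)<\infty$, and by \eqref{B1} the functional $B$ contains the term $\dot f\,b(v)$ as well as the compensator mismatch $\int_\R(\lt xy\rt-x\lt y\rt)\,\rho_v(dy)$; neither is dominated by $\int_\R(|x\dot f|\wedge|x\dot f|^2)\,\rho_v(dx)$ (for instance $|\lt xy\rt-x\lt y\rt|=|x|(|y|-1)$ is of order $|xy|$, not $|xy|^2$, when $|xy|\le 1$ and $|y|>1$), and the only drift hypothesis available, \eqref{drift-4}, concerns $f(0,v)$ rather than $\dot f$. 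So the existence of $Y_u$, and with it your Fubini representation $A_t=\int_0^t Y_u\,du$, is not justified by the stated hypotheses. Moreover, even granting the existence of each $Y_u$, the assertions that $u\mapsto Y_u$ is a.s.\ locally Lebesgue integrable and that $du$ may be interchanged with $\Lambda(ds,dv)$ constitute the genuinely hard part of the finite-variation statement; this is the content of \cite[Theorem~3.1]{fv-mma} (not of \citemma, which is the regular-variation criterion you cite), and it cannot be obtained by appealing to ``absolute control'' --- that is essentially assuming what is to be proved.

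The paper's proof is arranged precisely to avoid both difficulties, and in the opposite order to yours. It first shows, by Jensen's inequality applied to the convex function $\psi(u)=2\int_0^{|u|}(w\wedge 1)\,dw$, which is comparable to $|u|\wedge|u|^2$, that the kernel $(s,v)\mapsto g(t-s,v)-g(-s,v)=\int_0^t\dot g(u-s,v)\,du$ inherits \eqref{int_1}--\eqref{Cf} and hence satisfies \eqref{i2}; subtracting the kernel of $X_t-X_0$ (which satisfies \eqref{i1}--\eqref{i2} because $\X$ is well defined) shows that $(s,v)\mapsto f(0,v)\1_{(0,t]}(s)$ satisfies \eqref{i2}, while \eqref{drift-4} is exactly \eqref{i1} for this kernel. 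This yields $\M$ directly, and then $\A=\X-X_0-\M$ exists by linearity of the stochastic integral, so no drift condition on $\dot f$ is ever needed. The finite variation of $\A$ is then delegated to \cite[Theorem~3.1]{fv-mma}. If you want a self-contained argument along your lines, you would have to reproduce the proof of that theorem, which handles the drift and the a.s.\ integrability of $Y$ carefully (via series representations), rather than assert the Fubini interchange.
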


 \begin{proof}
 We need to verify the conditions of Theorem \ref{decomp-special}.
 We see that for $m$-a.e.\ $v \in V$, $g(\cdot,v)$ is absolutely continuous on $\R$ with derivative $\dot g(s,v)=\dot f(s,v)$ for $s > 0$ and $\dot g(s,v)=0$ for $s<0$. By  Jensen's inequality,  for each fixed $t>0$, the function
 \begin{equation}
(s,v)\mapsto g(t-s,v)-g(-s,v)=\int_0^t \dot g(u-s,v)\,du,
 \end{equation}
when substituted for $\dot  f(s,v)$ in \eqref{int_1}--\eqref{Cf}, satisfies these conditions. Indeed,  it is  straightforward to verify \eqref{int_1}. To verify  \eqref{Cf} we use the fact that $\psi\! :u\mapsto 2\int_0^{\abs{u}} (v\wedge 1)\,dv$ is convex and satisfies $\psi(u)\leq \abs{ux}\wedge \abs{ux}^2\leq 2\psi(u)$. In particular,  $(s,v)\mapsto g(t-s,v)-g(-s,v)$ satisfies  \eqref{i2} of the Introduction, and so does the function
 \begin{equation}
 (s,v)\mapsto f(0,v) \1_{(0, t]}(s)=g(t-s,v) -  g(-s,v) - [f(t-s,v) -  f(-s,v)].  
 \end{equation}
This fact together with assumption  \eqref{drift-4} guarantee that $\M$ of Theorem~\ref{decomp-special} is well-defined. Then $\A$ is well-defined by \eqref{dec-levy}. The process $\A$ is of finite variation by \cite[Theorem~3.1]{fv-mma} because $g(\cdot,v)$ is absolutely continuous on $\R$ and $\dot g(\cdot, v)= \dot f(\cdot, v)$ satisfies \eqref{int_1}--\eqref{Cf}.
  \end{proof}
%

 \begin{theorem}[Necessity]\label{thm-nes}
 Suppose   that $\X$ is a semimartingale with respect to $\Fi^\Lambda$ and 
 for $m$-almost every $v\in V$ we have either 
   \begin{equation}\label{invar-con}
\int_{-1}^1 \abs{x}\,\rho_{v}(dx)=\infty\quad \text{or}\quad 
  \sigma^2(v)>0.
 \end{equation}
 Then for $m$-a.e.\ $v$,     $ f(\cdot,v)$ is absolutely continuous on
 $[0,\infty)$ with a derivative $\dot f(\cdot,v)$ satisfying
 \eqref{int_1} and  
  \begin{align}
  \label{trunc_case} & \int_0^\infty\int_\R \big(\abs{x \dot
    f(s,v)}\wedge \abs{x \dot f(s,v)}^2\big)(1 \wedge
  x^{-2})\,\rho_v(dx)\, ds<\infty.   
  \end{align} 
  If, additionally,
\begin{equation}\label{eq:u0}
\limsup_{u \to \infty} \, \frac{u\int_{\abs{x}>u}
  \abs{x}\,\rho_v(dx)}{\int_{|x|\le u} x^2 \, \rho_v(dx)} < \infty
\qquad m\text{-a.e.} 
\end{equation}
then for $m$-a.e.\ $v$,
\begin{equation}\label{fdot_int}
 \int_{0}^\infty \int_{\R} ( |x{\dot f}(s,v)|^2 \wedge |x{\dot
   f}(s,v)|) \, \rho_v(dx)\, ds < \infty. 
 \end{equation}
Finally, if 
 \begin{equation}\label{eq:u00}
\sup_{v\in V}\sup_{u > 0} \, \frac{u\int_{\abs{x}>u}
  \abs{x}\,\rho_v(dx)}{\int_{|x|\le u} x^2 \, \rho_v(dx)} <\infty 
\end{equation} 
then  $\dot f$ satisfies \eqref{int_1}--\eqref{Cf}. 
  \end{theorem}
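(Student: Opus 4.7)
The plan is to reduce all three conclusions to the finite-variation characterization of SIMMA processes established in \cite{fv-mma}. Since $\X$ is a semimartingale with respect to $\Fi^\Lambda$, Theorem~\ref{decomp-special} yields $\X=X_0+\M+\A$ where
\begin{equation}
A_t=\int_{\R\times V}\bigl[g(t-s,v)-g(-s,v)\bigr]\,\Lambda(ds,dv),
\end{equation}
with $g(s,v)=f(s,v)-f(0,v)\1_{\{s\geq 0\}}$, and $\A$ has paths of finite variation almost surely. Since $g(\cdot,v)$ vanishes on $(-\infty,0]$, the process $\A$ is itself a SIMMA driven by $\Lambda$ with kernel $g$, so it falls squarely within the scope of \cite{fv-mma}.

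Next I would apply the necessity direction of the finite-variation criterion \citemma\ (equivalently, one half of \cite[Theorem~3.1]{fv-mma}) to $\A$. Hypothesis~\eqref{invar-con} is precisely the standing assumption under which that converse holds: it expresses that $\Lambda$ is not of bounded-variation type along each fibre $v$. The conclusion is that for $m$-a.e.\ $v$, $g(\cdot,v)$ is absolutely continuous on $\R$ with a derivative $\dot g(\cdot,v)$ satisfying the truncated integrability conditions that characterize finite variation of $\A$. Since $\dot g(s,v)=\dot f(s,v)$ for $s>0$ and $\dot g(s,v)=0$ for $s<0$, this translates directly into \eqref{int_1} for the Gaussian part and \eqref{trunc_case} for the jump part; in particular, $f(\cdot,v)$ is absolutely continuous on $[0,\infty)$.

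To sharpen \eqref{trunc_case} into the pointwise statement \eqref{fdot_int}, the task is to remove the damping factor $(1\wedge x^{-2})$. Writing $y=x\dot f(s,v)$ and splitting the inner $\rho_v$-integral at $|x|=1$, the small-jump piece is unaffected by the damping factor, while the large-jump contribution can be controlled via the Karamata-type ratio assumption \eqref{eq:u0}, which lets one trade the upper tail $u\int_{|x|>u}|x|\,\rho_v(dx)$ against the truncated second moment $\int_{|x|\leq u}x^2\,\rho_v(dx)$. Choosing the threshold $u$ to depend on $|\dot f(s,v)|$ so that the two regimes balance converts the truncated integral in \eqref{trunc_case} into the untruncated one in \eqref{fdot_int}. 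The uniform version \eqref{eq:u00} of this ratio bound then allows the same pointwise estimate to be integrated against $m(dv)$ via Fubini, producing \eqref{Cf}.

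The main obstacle is the tail comparison in the last step: the splitting threshold $u$ must be chosen carefully as a function of $|\dot f(s,v)|$ so that, after applying \eqref{eq:u0}, the resulting bound is expressible purely in terms of the already-finite truncated integrand of \eqref{trunc_case}; one must simultaneously preserve the small-jump and large-jump regimes and avoid losing a constant that depends on $v$ when passing to the uniform version under \eqref{eq:u00}. A secondary technicality is to confirm that the statement of \citemma\ applies to the auxiliary kernel $g$ with precisely the hypothesis \eqref{invar-con} replacing the infinite-variation assumption used there.
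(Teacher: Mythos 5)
Your overall strategy --- reduce to the finite-variation property of the component $\A$ from Theorem~\ref{decomp-special} and then invoke the characterization of finite variation for SIMMA processes from \cite{fv-mma} --- is exactly the paper's strategy. However, there is a genuine gap at the very first step: you apply Theorem~\ref{decomp-special} directly, but that theorem carries the standing hypothesis \eqref{drift-4}, which is \emph{not} among the hypotheses of Theorem~\ref{thm-nes}. The paper closes this gap with a symmetrization argument: take an independent copy $\Lambda'$ of $\Lambda$, note that $\X-\X'$ is a semimartingale with respect to $\Fi^{\Lambda-\Lambda'}\subseteq \Fi^{\Lambda}\vee\Fi^{\Lambda'}$, and work with the symmetric random measure $\Lambda-\Lambda'$, for which $B\equiv 0$ so that \eqref{drift-4} holds trivially; since the conditions \eqref{int_1}, \eqref{trunc_case}, \eqref{fdot_int} and \eqref{Cf} are unchanged under symmetrization of $(\sigma^2,\rho_v)$, the conclusions transfer back to the original process. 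Without this (or some other verification of \eqref{drift-4}) your appeal to Theorem~\ref{decomp-special} is not justified.

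A second, lesser issue concerns the passage from \eqref{trunc_case} to \eqref{fdot_int} and \eqref{Cf}. The paper does not perform the Karamata-type threshold-balancing argument you sketch: it cites \cite[Theorem~3.3]{fv-mma}, which already delivers, for the kernel $g$, absolute continuity together with \eqref{int_1} and \eqref{trunc_case}, and moreover \eqref{fdot_int} under \eqref{eq:u0} and \eqref{Cf} under \eqref{eq:u00}. Your plan to re-derive these implications by splitting the inner integral at a threshold depending on $\abs{\dot f(s,v)}$ is plausible in outline, but as written it is only a sketch --- you yourself flag the choice of threshold and the uniformity in $v$ as unresolved obstacles --- so as it stands this part is not a proof. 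Either carry out that estimate in full or, as the paper does, quote the relevant result of \cite{fv-mma} directly; note also that the statement you cite, \citemma, is the ratio-condition lemma rather than the finite-variation characterization itself, so the precise reference should be \cite[Theorem~3.3]{fv-mma}. The final translation from $g$ back to $f$ via $f(s,v)=g(s,v)+f(0,v)\1_{\{s\geq 0\}}$ is handled correctly in your proposal.
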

  
   \begin{proof}
 Assume that $\X$ is a semimartingale with respect to $\Fi^{\Lambda}$. 
 By a symmetrization argument we may assume that $\Lambda$ is a symmetric random measure. Indeed,  let $\Lambda'$ be an independent copy of $\Lambda$ and $\X'$ be defined by \eqref{eq:mma} with $\Lambda$ replaced by $\Lambda'$. Then $\X'$ is a semimartingale with respect to  $\Fi^{\Lambda'}$. By the independence, both $\X$ and $\X'$ are semimartingales with respect to  $\Fi^\Lambda\vee \Fi^{\Lambda'}$ and since $\Fi^{\Lambda-\Lambda'}\subseteq \Fi^\Lambda\vee \Fi^{\Lambda'}$, the process $\X-\X'$ is a semimartingale with respect to  $\Fi^{\Lambda-\Lambda'}$. This shows that we may assume that $\Lambda$ is symmetric. Then  \eqref{drift-4} holds since $B=0$.   
 
 By Theorem~\ref{decomp-special}  process $\bf A$ in \eqref{A-levy} is of finite variation. 
It follows from \cite[Theorem~3.3]{fv-mma}  that for $m$-a.e.\ $v$, $g(\cdot,v)$ is absolutely continuous on $\R$ with a derivative $\dot g(\cdot,v)$ satisfying \eqref{int_1} and \eqref{trunc_case}.  Furthermore $\dot g$ satisfies \eqref{fdot_int} under  assumption \eqref{eq:u0},  and under assumption \eqref{eq:u00}, $\dot g$ satisfies   \eqref{Cf}. Since $f(s,v)=g(s,v)+f(0,v)\1_{\{s\geq 0\}}$, $f(\cdot,v)$ is absolutely continuous on $[0,\infty)$ with a derivative $\dot f(\cdot,v)=\dot g(\cdot,v)$ for $m$-a.e.\ $v$  satisfying the conditions of the theorem.  
 \end{proof}

  \begin{remark}\label{cor-iff}
Theorem~\ref{thm-nes} becomes an exact converse to Theorem~\ref{thm-suf} when \eqref{invar-con} holds and either  \eqref{eq:u0} holds and $V$ is a finite set, or \eqref{eq:u00} holds. 
\end{remark}

  \begin{remark}\label{remark_1}
Condition~\eqref{invar-con} is in general necessary to deduce that $f$
has absolutely continuous sections. Indeed, let $V$ be a one point
space so that $\Lambda$ is generated by increments of a L\'evy process
denoted again by $\Lambda$. If \eqref{invar-con} is not satisfied,
then taking  $f=\1_{[0,1]}$ we get that $X_t=\Lambda_t-\Lambda_{t-1}$
is of finite variation and hence a semimartingale, but $f$ is not
continuous on $[0,\infty)$. 
\end{remark}

\smallskip

Next we will consider several consequences of Theorems~\ref{thm-suf}--\ref{thm-nes}. When there is no $v$-component, \eqref{drift-4} is always satisfied and $\Lambda$ is generated by a two-sided  L\'evy process.
In what follows, $\mathbf{Z}=(Z_t)_{t\in \R}$ will denote a non-deterministic two-sided  L\'evy process, with characteristic triplet $(b,\sigma^2,\rho)$ and  $Z_0=0$.  $\Fi^Z$ will denote the least  filtration satisfying the usual conditions such that   $\sigma(Z_u:u\in (-\infty,t])\subseteq \F^Z_t$ for all $t\geq 0$.
\smallskip	

The following proposition characterizes fractional L\'evy processes which are semimartingales, and  completes results of \cite[Corollary~5.4]{Basse_Pedersen} 
and parts of \cite[Theorem~1]{Be_Li_Sc}.

\begin{proposition}[Fractional L\'evy processes]\label{fLp}
 Let $\gamma >0$,   $x_+:=\max\{x,0\}$ for $x\in \R$, $\mathbf{Z}$ be a L\'evy process as above, and   $\bf X$ be a fractional L\'evy process defined by  \begin{equation}\label{def-frac}
X_t=\int_{-\infty}^t \big\{(t-s)^\gamma_+-(-s)_+^\gamma\,\big\}\,dZ_s
\end{equation}
where   the stochastic integrals exist. 
Then  $\bf X$ is a semimartingale with respect to $\Fi^Z$ if and only if  $\sigma^2=0$, $\gamma\in (0,\tfrac{1}{2})$ and 
\begin{equation}\label{int-finite-786}
\int_\R \abs{x}^{\frac{1}{1-\gamma}}\,\rho(dx)<\infty. 
\end{equation}
\end{proposition}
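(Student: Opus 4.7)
\emph{Setup.} The process \eqref{def-frac} fits the SIMMA framework of Theorem~\ref{decomp-special} with $V$ a single point, $f(s)=s_+^\gamma$, and $f_0=f$. Since $\gamma>0$, $f(0)=0$, so the martingale integrand in \eqref{M-levy} vanishes; hence $\mathbf{M}\equiv 0$, and since $X_0=0$, $\X=\A$. The drift condition \eqref{drift-4} is automatic as $B(0)=0$. Thus $\X$ is a semimartingale iff $\A$ from \eqref{A-levy} is of finite variation.

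\emph{Sufficiency.} I would apply Theorem~\ref{thm-suf} with $\dot f(s)=\gamma s^{\gamma-1}$. Condition \eqref{int_1} reads $\sigma^2\gamma^2\int_0^\infty s^{2\gamma-2}\,ds$, which diverges at $s=0$ for any $\gamma>0$ unless $\sigma^2=0$. For \eqref{Cf}, Fubini gives
\begin{equation*}
\int_\R\rho(dx)\int_0^\infty\bigl(\gamma|x|s^{\gamma-1}\wedge\gamma^2 x^2 s^{2\gamma-2}\bigr)\,ds,
\end{equation*}
and splitting the inner integral at $s_0(x)=(\gamma|x|)^{1/(1-\gamma)}$ (where the two terms coincide), the piece on $[0,s_0]$ evaluates to a constant multiple of $|x|^{1/(1-\gamma)}$, while the piece on $[s_0,\infty)$ converges iff $\gamma<1/2$ and then also gives a constant multiple of $|x|^{1/(1-\gamma)}$. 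Hence \eqref{Cf} reduces to $C_\gamma\int_\R|x|^{1/(1-\gamma)}\rho(dx)<\infty$, so Theorem~\ref{thm-suf} delivers the semimartingale property.

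\emph{Necessity.} Suppose $\X$ is an $\Fi^Z$-semimartingale. If $\sigma^2>0$, then \eqref{invar-con} holds, Theorem~\ref{thm-nes} yields \eqref{int_1} for $\dot f(s)=\gamma s^{\gamma-1}$, and this contradicts the divergence of $\int_0^\infty s^{2\gamma-2}\,ds$ at the origin; thus $\sigma^2=0$. For the two remaining conditions I would exploit the standing hypothesis that the integral \eqref{def-frac} exists: with $\sigma^2=0$ and $\phi(t,s)=(t-s)_+^\gamma-(-s)_+^\gamma$, Rosinski's integrability criterion from the Preliminaries requires
\begin{equation*}
\int_{-\infty}^t\!\!\int_\R\lt\phi(t,s)y\rt^2\,\rho(dy)\,ds<\infty.
\end{equation*}
Substituting $u=-s$ on $s<0$ and using $a(u):=(u+t)^\gamma-u^\gamma\sim\gamma t\,u^{\gamma-1}$ as $u\to\infty$, the splitting argument of the sufficiency paragraph shows that the $s<0$ contribution is finite iff $\gamma<1/2$ and $\int_{|y|>1}|y|^{1/(1-\gamma)}\rho(dy)<\infty$, while the $s\in[0,t]$ part is always finite by a uniform bound using $\phi(t,s)\le t^\gamma$. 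The complementary small-jump bound $\int_{|y|\le 1}|y|^{1/(1-\gamma)}\rho(dy)<\infty$ is either automatic (if $\int_{|y|\le 1}|y|\rho(dy)<\infty$, since $1/(1-\gamma)\ge 1$) or, when \eqref{invar-con} is in force, follows from Theorem~\ref{thm-nes}'s condition \eqref{trunc_case} restricted to $|x|\le 1$ (where $1\wedge x^{-2}=1$) via the same splitting.

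\emph{Main obstacle.} The delicate step is the asymptotic analysis of $\int_0^\infty (a(u)y)^2\wedge 1\,du$ that extracts the tail exponent $1/(1-\gamma)$ directly from the existence of the integral, together with the case distinction for small jumps. This detour is needed because the sharper hypothesis \eqref{eq:u00} of Theorem~\ref{thm-nes}, which would deliver all of \eqref{Cf} in one shot, need not hold for general L\'evy measures (e.g.\ stable $\rho$).
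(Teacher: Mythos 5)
Your proposal is correct and follows essentially the same route as the paper's proof: you derive $\gamma<\tfrac12$ and the large-jump bound $\int_{|x|>1}|x|^{1/(1-\gamma)}\rho(dx)<\infty$ from the existence of the integral \eqref{def-frac} via the Rajput--Rosi\'nski criterion, rule out $\sigma^2>0$ through \eqref{int_1}, handle the small jumps by the same dichotomy (automatic when $\int_{|x|\le1}|x|\rho(dx)<\infty$, otherwise via \eqref{invar-con} and \eqref{trunc_case}), and use the identical closed-form evaluation $\int_0^\infty(|x\dot f|\wedge|x\dot f|^2)\,dt=C|x|^{1/(1-\gamma)}$ for sufficiency via Theorem~\ref{thm-suf}. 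The only differences are cosmetic (the substitution $u=-s$ versus the paper's pointwise lower bound $|(1-s)^\gamma-(-s)_+^\gamma|\ge\gamma(1-s)^{\gamma-1}$), and your closing observation about why \eqref{eq:u00} cannot be invoked is exactly the reason the paper takes this detour.
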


\begin{proof}
First we notice that, as a consequence  of  $\X$ being   well-defined, $\gamma<\tfrac{1}{2}$ and 
\begin{equation}\label{equ-34}
\int_{\abs{x}>1} \abs{x}^{\frac{1}{1-\gamma}}\,\rho(dx)<\infty.
\end{equation}
Indeed, since the stochastic integral \eqref{def-frac} is well-defined,  \cite[Theorem~2.7]{Rosinski_spec} shows that 
\begin{equation}\label{Ros-Raj-23}
\int_{-\infty}^t \int_\R \big(1\wedge \abs{\{(t-s)^\gamma-(-s)^\gamma_+\}x}^2\big)\,\rho(dx)\,ds<\infty, \qquad t\geq 0.
\end{equation}
This implies that  $\gamma<\tfrac{1}{2}$ if $\rho(\R)>0$. A similar argument shows that $\gamma<\tfrac{1}{2}$ if $\sigma^2>0$, and thus, by the non-deterministic assumption on $\bf Z$,  we have shown  that $\gamma<\frac{1}{2}$.  Putting $t=1$ in~\eqref{Ros-Raj-23} and using  the estimate $\abs{(1-s)^\gamma-(-s)^\gamma_+}\geq \abs{\gamma (1-s)^{\gamma-1}}$ for $s\in (-\infty,0]$  we get
\begin{align}
\infty>{}&\int_{-\infty}^0 \int_\R \big(1\wedge \abs{\gamma(1-s)^{\gamma-1}x}^2\big)\,\rho(dx)\,ds\\
= {}& \int_\R \int_1^\infty \big(1\wedge \abs{\gamma s^{\gamma-1}x}^2\big)\,ds\,\rho(dx)
\\ \geq {}& \int_\R \int_{1\leq s\leq \abs{\gamma x}^{\frac{1}{1-\gamma}} }\,ds\,\rho(dx)\geq  \int_{\abs{\gamma x}>1} \big(\abs{\gamma x}^{\frac{1}{1-\gamma}} -1\big)\,\rho(dx).
\end{align}
This shows \eqref{equ-34}.

Suppose that $\bf X$ is a semimartingale. 
If  $\sigma^2>0$, then according to   Theorem~\ref{thm-nes},   $f$ is absolutely continuous on $[0,\infty)$ with a derivative $\dot f$ satisfying
\begin{align}
\int_0^\infty \abs{\dot f(t)}^2\,dt=\int_0^\infty \gamma^2 t^{2(\gamma-1)}\,dt<\infty\end{align}
which  is a contradiction and shows that $\sigma^2=0$. 
By the  non-deterministic assumption on  $\bf Z$ we have $\rho(\R)>0$.  To complete the proof of the necessity part, it remains to show that
\begin{equation}\label{equ-34a}
\int_{\abs{x}\le 1} \abs{x}^{\frac{1}{1-\gamma}}\,\rho(dx)<\infty.
\end{equation}
Since $\dot f(t)=\gamma t^{\gamma-1}$ for $t>0$, we have
\begin{equation}\label{basic-cal}
\int_0^\infty \big\{\abs{x \dot f(t)}\wedge \abs{x \dot f(t)}^2\big\}\,dt=C \abs{x}^{\frac{1}{1-\gamma}} 
\end{equation}
where $C=\gamma^{\frac{1}{1-\gamma}}(\gamma^{-1}+(1-2\gamma)^{-1})$. 
In the case  $\int_{\abs{x}\leq 1} \abs{x}\,\rho(dx)<\infty$ \eqref{equ-34a} holds    since $1<\tfrac{1}{1-\gamma}$. Thus we may assume that $\int_{\abs{x}\leq 1}\abs{x}\,\rho(dx)=\infty$, that is, \eqref{invar-con} of Theorem~\ref{thm-nes} is satisfied.  By 
Theorem~\ref{thm-nes}~\eqref{trunc_case} and \eqref{basic-cal} we have 
\begin{equation}
\int_{\abs{x}\leq 1} \abs{x}^{\frac{1}{1-\gamma}} \,\rho(dx)\leq \int_\R \abs{x}^{\frac{1}{1-\gamma}} (1\wedge x^{-2})\,\rho(dx)<\infty
\end{equation}
which completes the proof of the necessity part.   

On the other hand, suppose that $\sigma^2=0$, $\gamma\in (0,\tfrac{1}{2})$ and \eqref{int-finite-786} is satisfied. By \eqref{int-finite-786} and \eqref{basic-cal},  $f$ is absolutely continuous on $[0,\infty)$ with a derivative $\dot f$ satisfying \eqref{Cf} and hence $\bf X$ is a semimartingale with respect to $\Fi^Z$, cf.\ Theorem~\ref{thm-suf}. 
\end{proof}


\medskip

Below we will recall the conditions from  \cite{fv-mma} under which \eqref{eq:u0} or \eqref{eq:u00} hold.
Recall that a measure $\mu$ on $\R$ is said to be regularly varying if $x\mapsto \mu([-x,x]^c)$  is a regularly varying function; see \cite{Bingham}.  

\begin{proposition}[\citemma]\label{remark}  
Condition \eqref{eq:u0} is satisfied when one of the following two conditions holds for $m$-almost every $v \in V$
\begin{enumerate}[(i)]
	\item  \label{pro-con-1}  $\int_{\abs{x}>1} x^2 \, \rho_v(dx)<\infty$ or
\item  \label{pro-con-2} $\rho_v$ is regularly varying at $\infty$ with
          index $\beta\in [-2,-1)$. 
\end{enumerate}
Suppose that $\rho_v=\rho$ for all $v$, where $\rho$ satisfies \eqref{eq:u0} and is regularly varying with index $\beta\in (-2,-1)$ at 0. Then  \eqref{eq:u00} holds.  
\end{proposition}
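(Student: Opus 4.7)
The plan is to analyze the quotient
\[
r_v(u) := \frac{u\int_{|x|>u} |x|\,\rho_v(dx)}{\int_{|x| \leq u} x^2\,\rho_v(dx)},
\]
by reducing the numerator and denominator to integrals of the tail function $U_v(u):=\rho_v(\{|x|>u\})$, then applying Karamata's theorem on regular variation. Fubini's theorem (or integration by parts against $U_v$) gives the identities
\[
\int_{|x|>u}|x|\,\rho_v(dx) = uU_v(u)+\int_u^{\infty} U_v(y)\,dy, \qquad
\int_{|x|\leq u} x^2\,\rho_v(dx) = -u^2 U_v(u)+2\int_0^u yU_v(y)\,dy,
\]
valid under the Lévy measure condition $\int (1\wedge x^2)\,\rho_v(dx)<\infty$. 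These are the two basic formulas I would use throughout.

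For part (i), the simple estimate $u\int_{|x|>u}|x|\,\rho_v(dx)\leq \int_{|x|>u} x^2\,\rho_v(dx)$ shows that the numerator of $r_v(u)$ is at most the tail $\int_{|x|>u} x^2\,\rho_v(dx)$, which tends to $0$ as $u\to\infty$ under the assumption $\int_{|x|>1} x^2\,\rho_v(dx)<\infty$. On the other hand, the denominator is nondecreasing in $u$ and converges (if $\rho_v\neq 0$) to a strictly positive limit; otherwise the ratio is trivially $0$. Hence $\limsup_{u\to\infty} r_v(u)=0$, verifying \eqref{eq:u0}.

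For part (ii), write $U_v(u)=u^\beta L_v(u)$ with $L_v$ slowly varying at infinity. For $\beta\in (-2,-1)$, Karamata's theorem yields
\[
\int_u^{\infty} U_v(y)\,dy \sim \frac{uU_v(u)}{-\beta-1}, \qquad \int_0^u yU_v(y)\,dy \sim \frac{u^2 U_v(u)}{\beta+2},
\]
as $u\to\infty$, so that both numerator and denominator of $r_v(u)$ are asymptotic to constant multiples of $u^2 U_v(u)$. The resulting finite limit of $r_v(u)$ gives \eqref{eq:u0}. The boundary cases $\beta=-1$ and $\beta=-2$ will be the main obstacle: at $\beta=-1$ the Karamata asymptotic for $\int_u^\infty U_v$ degenerates and I would instead use the de Haan framework (or simply bound the ratio by an explicit slowly varying quotient using Potter-type bounds); at $\beta=-2$ the denominator grows slower than $u^2 U_v(u)$, but the numerator is even smaller since $\int_u^\infty U_v(y)\,dy$ converges to $0$ faster, so the ratio still vanishes. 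In both boundary cases the goal is only a finite $\limsup$, which is easier than identifying a limit.

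For the last statement, assume $\rho_v=\rho$ for all $v$, with $\rho$ regularly varying at $0$ with index $\beta\in(-2,-1)$ and satisfying \eqref{eq:u0}. The hypothesis \eqref{eq:u0} handles the behaviour of $r(u)$ for $u\to\infty$. For $u\downarrow 0$, I would apply Karamata's theorem at $0$ to $U(u)=u^{\beta}\widetilde L(u)$ to obtain, on a neighbourhood of $0$, asymptotic equivalences of the same form as above, proving that $r(u)$ is also bounded as $u\to 0$. Since $r$ is continuous on any compact subinterval of $(0,\infty)$ on which $\int_{|x|\leq u} x^2\,\rho(dx)>0$, combining the three regimes $u\to 0$, $u$ in compact subsets of $(0,\infty)$, and $u\to\infty$ yields $\sup_{u>0}r(u)<\infty$, which is \eqref{eq:u00}. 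The sup over $v$ in \eqref{eq:u00} is trivial because $\rho_v$ does not depend on $v$.
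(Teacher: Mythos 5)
The paper itself contains no proof of this proposition: it is imported verbatim as \cite[Proposition~3.5]{fv-mma}, so there is no in-paper argument to compare against. Your route — rewriting numerator and denominator via the tail function $U_v(u)=\rho_v(\{|x|>u\})$ with the two Fubini identities and then applying Karamata's theorem — is the natural one and almost certainly what the cited reference does. Part (i), the case $\beta\in(-2,-1)$ of part (ii), and the final statement on \eqref{eq:u00} are all handled correctly (for the compact range of $u$ you only need boundedness of the ratio, not continuity, and positivity of $\int_{|x|\le a}x^2\rho(dx)$ for every $a>0$ follows from the regular variation of $\rho$ at $0$ with index in $(-2,-1)$, which forces infinite mass near the origin).

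Two things need repair in part (ii). First, $\beta=-1$ is excluded by the hypothesis $\beta\in[-2,-1)$, so the de Haan/Potter digression is moot; this is just as well, since for $\beta=-1$ the quantity $\int_u^\infty U_v(y)\,dy$, hence the numerator, need not even be finite. Second, and this is a genuine error, your treatment of the boundary case $\beta=-2$ is backwards. Writing $U_v(u)=u^{-2}L_v(u)$, Karamata gives $u\int_u^\infty U_v(y)\,dy\sim u^2U_v(u)=L_v(u)$, so the numerator of $r_v(u)$ is asymptotic to $2\,u^2U_v(u)$: it is \emph{comparable} to $u^2U_v(u)$, not ``even smaller''. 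What actually makes the ratio vanish is the denominator: $\int_{|x|\le u}x^2\rho_v(dx)=2\int_0^u yU_v(y)\,dy-u^2U_v(u)$, and since $\int_1^u y^{-1}L_v(y)\,dy\big/L_v(u)\to\infty$ (the degenerate Karamata case, \cite[Proposition~1.5.9a]{Bingham}; if the integral converges then $L_v(u)\to0$ and the denominator tends to a positive constant), the denominator \emph{dominates} $u^2U_v(u)$ — it grows faster, not slower as you assert. So $r_v(u)\to0$ for $\beta=-2$ as well, but for the opposite reason to the one you give. The fix lives entirely inside your framework, so this is a wrong step rather than a structural gap, but as written your justification of (ii) at $\beta=-2$ does not stand.
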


Theorems~\ref{thm-suf}--\ref{thm-nes} and Proposition~\ref{remark} gives the following:

\begin{corollary}\label{cor-levy}
Suppose that   $\mathbf{Z}=(Z_t)_{t\in \R}$ is a two-sided  L\'evy process as above, with paths of  infinite variation on compact intervals. Let $\mathbf{X}=(X_t)_{t\geq 0}$ be a process of the form 
\begin{equation}\label{MA-case}
X_t=\int_{-\infty}^t \big\{f(t-s)-f_0(-s)\big\}\,dZ_s.
\end{equation}
Suppose that the random variable $Z_1$ is either square-integrable or has a regularly varying distribution at $\infty$ of index $\beta\in [-2,-1)$. 
Then $\bf X$ is a semimartingale with respect to $\Fi^Z$  if and only if $f$ is absolutely continuous on $[0,\infty)$ with a derivative $\dot f$ satisfying 
\begin{align}
& \int_0^\infty \abs{\dot f(t)}^2\,dt<\infty \qquad \text{if }\sigma^2>0, \\
& \label{levy-int-cor}\int_0^\infty \int_\R \big(\abs{x \dot f(t)}\wedge \abs{x \dot f(t)}^2\big)\,\rho(dx)\,dt<\infty.
\end{align}
\end{corollary}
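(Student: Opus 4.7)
The plan is to read Corollary~\ref{cor-levy} as the specialization of Theorems~\ref{thm-suf} and \ref{thm-nes} to the case where $V$ is a one-point space, so that $\Lambda$ reduces to the two-sided L\'evy process $\mathbf Z$ and every integral over $m(dv)$ collapses. In this setting the drift condition \eqref{drift-4} is trivially satisfied (the integrand is a single finite constant) and the $m$-a.e.\ qualifiers disappear, so that \eqref{int_1} and \eqref{Cf} reduce exactly to the two displayed conditions on $\dot f$. The sufficiency direction is then an immediate application of Theorem~\ref{thm-suf}.

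For the necessity direction I would first rephrase the infinite-variation assumption on the paths of $\mathbf Z$ on compacts as the statement that either $\sigma^2>0$ or $\int_{-1}^1 |x|\,\rho(dx)=\infty$, which is precisely \eqref{invar-con}. The next step is to verify, via Proposition~\ref{remark}, that each of the two tail hypotheses on $Z_1$ forces \eqref{eq:u0}: square-integrability of $Z_1$ is equivalent to $\int_{|x|>1} x^2\,\rho(dx)<\infty$, placing us in case (i) of the proposition, while regular variation at $\infty$ of the law of $Z_1$ of index $\beta\in[-2,-1)$ transfers to regular variation of $\rho$ with the same index (via the classical L\'evy-tail equivalence $\P(|Z_1|>u)\sim \rho(\{|x|>u\})$ as $u\to\infty$), placing us in case (ii).

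With \eqref{invar-con} and \eqref{eq:u0} in hand, and with $V$ reduced to a single point, hence \emph{a fortiori} finite, Remark~\ref{cor-iff} applies and yields the exact converse to Theorem~\ref{thm-suf}. Consequently $f$ must be absolutely continuous on $[0,\infty)$ and its derivative must satisfy \eqref{int_1} and \eqref{Cf}, which read as the two displayed conditions on $\dot f$ in the corollary. The only genuine obstacle is the L\'evy-tail equivalence step, needed in the regularly-varying case to transfer the hypothesis from the law of $Z_1$ to that of $\rho$; since this is a standard fact for L\'evy processes, the corollary follows essentially by assembling Theorems~\ref{thm-suf} and \ref{thm-nes}, Proposition~\ref{remark}, and Remark~\ref{cor-iff}.
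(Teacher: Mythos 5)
Your proposal is correct and follows essentially the same route as the paper: translate the two hypotheses on $Z_1$ into conditions \eqref{pro-con-1} and \eqref{pro-con-2} of Proposition~\ref{remark} (via Sato's moment criterion and the L\'evy-tail equivalence of Embrechts--Goldie--Veraverbeke, which are exactly the references the paper cites), identify the infinite-variation assumption on $\mathbf Z$ with \eqref{invar-con}, and then apply Theorems~\ref{thm-suf}--\ref{thm-nes} in the one-point-$V$ case where, as you note via Remark~\ref{cor-iff}, they are exact converses of one another. The only difference is that you spell out the bookkeeping (finiteness of $V$ making \eqref{fdot_int} coincide with \eqref{Cf}) that the paper leaves implicit.
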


\begin{proof}[Proof Corollary~\ref{cor-levy}]
The conditions imposed  on $Z_1$ are equivalent to that $\rho$ statisfies \eqref{pro-con-1} or \eqref{pro-con-2} of Proposition~\ref{remark}, respectively, cf.\  \cite[Theorem~1]{reg-var} and \cite[Theorem~25.3]{Sato}. Moreover, \eqref{invar-con} of Theorem~\ref{thm-nes} is equivalent to that $\bf Z$ has sample paths of infinite variation on compacs and hence the result follows by Theorems~\ref{thm-suf}--\ref{thm-nes}. 
\end{proof}


\begin{example}\label{ex-stable}
 In following we will consider  $\bf X$ and $\bf Z$  given as in Corollary~\ref{cor-levy} where  $\bf Z$ is either a stable or a tempered stable L\'evy process. 

\textbf{(i) Stable:}
Assume that  $\bf Z$ is a symmetric $\alpha$-stable L\'evy process with index $\alpha\in (1,2)$, that is, $\rho(dx)=c\abs{x}^{-\alpha-1}\,dx$ where $c>0$, and $\sigma^2=b=0$.  
Then $\bf X$ is a semimartingale with respect to $\Fi^Z$ if and only if $f$ is absolutely continuous on $[0,\infty)$ with a derivative $\dot f$ satisfying 
\begin{equation}\label{ex-stable-eq}
\int_0^\infty \abs{\dot f(t)}^\alpha\,dt<\infty.
\end{equation}
 We use  Corollary~\ref{cor-levy} to show the above. Note that  $\int_{\abs{x}\leq 1} \abs{x}\,\rho(dx)=\infty$ and $\rho$ is regularly varying at $\infty$ of index $-\alpha\in (-2,-1)$. Moreover, the identity
\begin{equation}\label{cal-stable}
\int_\R \big(\abs{xy}\wedge \abs{xy}^2\big)\,\rho(dx)=C \abs{y}^\alpha,\qquad y\in \R, 
\end{equation}
 with $C=2c ((2-\alpha)^{-1}+(\alpha-1)^{-1})$, shows that \eqref{levy-int-cor} is equivalent to \eqref{ex-stable-eq}. Thus the result follows by Corollary~\ref{cor-levy}.
 
 \textbf{(ii) Tempered stable:} Suppose  that $\bf Z$ is a symmetric tempered stable L\'evy process with indexs $\alpha\in [1,2)$ and $\lambda>0$, i.e., $\rho(dx)=c \abs{x}^{-\alpha-1}e^{-\lambda \abs{x}} \,dx$ where $c>0$,  and $\sigma^2=b=0$.  
Then $\bf X$ is a semimartingale with respect to $\Fi^Z$ if and only if $f$ is absolutely continuous on $[0,\infty)$ with a derivative $\dot f$ satisfying 
\begin{equation}\label{eq-temp-stable}
\int_0^\infty\big(\abs{ \dot f(t)}^{\alpha}\wedge \abs{ \dot f(t)}^2\big) \, ds <\infty.
\end{equation}
Again we will use Corollary~\ref{cor-levy}.  The conditions imposed on $\bf Z$ in Corollary \ref{cor-levy} are satisfied due to the fact that   $\int_{\abs{x}\leq 1} \abs{x}\,\rho(dx)=\infty$ and $\int_{\abs{x}>1}\abs{x}^2\,\rho(dx)<\infty$.  Moreover,  using the asymptotics of the incomplete gamma functions we have that 
\begin{equation}\label{tem-ex-levy23}
\int_\R \big(\abs{xu}\wedge \abs{xu}^2\big)\,\rho(dx)\sim \begin{cases} C_1 u^\alpha & 
\text{as } u\to \infty\\ C_2 u^2 & \text{as } u\to 0
\end{cases}
\end{equation}
where $C_1, C_2>0$ are finite constants depending only on $\alpha, c$ and $\lambda$, and we write $f(u)\sim g(u)$ as $u\to \infty$ (resp.\ $u\to 0$) when $f(u)/g(u)\to 1$ as $u\to \infty$ (resp.\ $u\to 0$).  Eq.~\eqref{tem-ex-levy23} shows that  \eqref{levy-int-cor} is equivalent to \eqref{eq-temp-stable}, and hence the result follows by Corollary~\ref{cor-levy}.   
\end{example}

\medskip
\begin{example}[Multi-stable] \label{cor-stable}
In this example we  extend Example~\ref{ex-stable}(i) to the so called multi-stable processes, that is,  we will consider   $\bf X$  given by \eqref{eq:mma} with  
\begin{equation}
\rho_v(dx)=c(v)\abs{x}^{-\alpha(v)-1} \, dx
\end{equation}
 where $\morf{\alpha}{V}{(0,2)}$ and $\morf{c}{V}{(0,\infty)}$ are measurable functions, and $b=\sigma^2=0$. For $v\in V$,  $\rho_v$ is the L\'evy measure of a symmetric stable distribution with index $\alpha(v)$.  Assume that 
 there exists an $r>1$ such that $\alpha(v)\geq r$ for all $v\in V$. Then  $\X$ is a semimartingale with respect to $\Fi^\Lambda$  if and only if  for $m$-a.e.\ $v$, $f(\cdot,v)$ is absolutely continuous on $[0,\infty)$ with  a derivative $\dot f(\cdot,v)$ satisfying
 \begin{equation}\label{stable-ex}
 \int_V\int_0^\infty\Big( \frac{c(v)}{2-\alpha(v)}\abs{\dot f(s,v)}^{\alpha(v)}\Big) \,ds\,m(dv)<\infty.
 \end{equation}

To show the above  we will argue similarly as in Example~\ref{ex-stable}. By the symmetry, \eqref{drift-4} is satisfied. For all $v\in V$,  $\int_{\abs{x}\leq 1} \abs{x}\,\rho_v(dx)=\infty$, which shows that  \eqref{invar-con} of Theorem~\ref{thm-nes} is satisfied.  
By  basic calculus we have for $v\in V$ that 
\begin{equation}\label{int-stable-24}
u\int_{\abs{x}>u} \abs{x}\,\rho_v(dx)=K(v) \int_{\abs{x}\leq u} x^2\,\rho_v(dx)\end{equation}
where $K(v)=(2-\alpha(v))/(\alpha(v)-1)$. Since $\alpha(v)\geq r$ we have that 
$K(v)\leq 2/(r-1)<\infty$ which together with \eqref{int-stable-24} implies \eqref{eq:u00}. 
From  \eqref{cal-stable} we infer that \eqref{Cf} is equivalent to \eqref{stable-ex}, and thus Theorems~\ref{thm-suf}--\ref{thm-nes}  conclude the proof. 
   \end{example}
    \medskip

  

\begin{example}[supFLP]\label{cor:frac_levy}

 Consider  $\X=\p X$ of the form
\begin{equation}\label{def_multi_frac}
 X_t=\int_{\R\times V} \big( (t-s)_+^{\gamma(v)}-(-s)_+^{\gamma(v)}\big) \,\Lambda(ds,dv),
\end{equation}
where $\morf{\gamma}{V}{(0,\infty)}$ is a measurable function.
 Processes of the form \eqref{def_multi_frac} may be viewed as superpositions of fractional L\'evy processes with (possible) different indexes;  hence the name  supFLP.    If $m$-almost everywhere  $\gamma\in (0,\frac{1}{2})$, $\sigma^2 =0$ and 
  \begin{equation}\label{suf_con_frac}
 \int_V \Big(\int_\R \abs{x}^{\frac{1}{1-\gamma(v)}} \,\rho_v(dx)\Big) \big(\tfrac{1}{2}-\gamma(v)\big)^{-1}\,m(dv)<\infty,
\end{equation}
then  $\X$ is a semimartingale with respect to $\Fi^{\Lambda}$.
Conversely, if $\X$ is a semimartingale with respect to $\Fi^{\Lambda}$ and $\int_{\abs{x}\leq 1}\abs{x}\,\rho_v(dx)=\infty$ for $m$-a.e.\ $v$, then  $m$-a.e.\  $\gamma\in (0,\frac{1}{2})$, $ \sigma^2=0$ and 
\begin{equation}\label{eq:nece_ex}
 \int_\R \abs{x}^{\frac{1}{1-\gamma(v)}}\,\rho_v(dx)<\infty,
  \end{equation}
 and if in addition $\rho$ satisfies \eqref{eq:u00},  then \eqref{suf_con_frac} holds.     
  
  To show the above  let $f(t,v)=t^{\gamma(v)}_+$ for $t\in\R, v\in V$.
  Since   $f(0,v)=0$ for all $v$, \eqref{drift-4} is satisfied. As in Example~\ref{fLp}, we observe that the conditions
 \begin{equation}\label{eq-mult-32}
\int_{\abs{x}\geq 1} \abs{x}^{\frac{1}{1-\gamma(v)}}\,\rho_v(dx)<\infty\quad  \text{and} \quad \gamma(v)<\tfrac{1}{2}\quad  m\text{-a.e.}
 \end{equation}
 follow from the fact  that  $\X$ is a well-defined.  For $\gamma(v)\in (0,\frac{1}{2})$, $f(\cdot,v)$ is absolutely continuous on $[0,\infty)$. By \eqref{basic-cal}  we deduce that 
    \begin{align}\label{est-frac-1}
 \frac{c\abs{x}^{\frac{1}{1-\gamma(v)}}}{\tfrac{1}{2}-\gamma(v)}\leq \int_0^\infty \{\abs{x \dot f(t,v)}\wedge \abs{x \dot f(t,v)}^2\}\,dt \leq   
 \frac{\tilde c\abs{x}^{\frac{1}{1-\gamma(v)}}}{\tfrac{1}{2}-\gamma(v)}
   \end{align}
   for all $x\in \R$, 
where $c, \tilde c >0$ are finite constants not depending $v$ and $x$.  


 By  Theorem~\ref{thm-suf} and \eqref{est-frac-1}, the sufficient part follows. To show the necessary part assume that $\X$ is a semimartingale with respect to $\Fi^{\Lambda}$ and that $\int_{\abs{x}\leq 1}\abs{x}\,\rho_v(dx)=\infty$ for $m$-a.e.\ $v$. 
  By Theorem~\ref{thm-nes},  $f(\cdot,v)$ is absolutely continuous with a derivative $\dot f(\cdot,v)$ satisfying \eqref{int_1} and \eqref{trunc_case}. From \eqref{int_1} we deduce that $\sigma^2=0$ $m$-a.e.\ and  from \eqref{trunc_case} and \eqref{est-frac-1} we infer that  
 \begin{equation}\label{eq-984}
\int_{\abs{x}\leq 1} \abs{x}^{\frac{1}{1-\gamma(v)}}\,\rho_v(dx)<\infty\quad m\text{-a.e.\ }v. 
 \end{equation}
    By   \eqref{eq-mult-32}--\eqref{eq-984}, condition \eqref{eq:nece_ex} follows. Moreover, if  $\rho$ satisfies \eqref{eq:u00},  then  Theorem~\ref{thm-nes} together with  \eqref{est-frac-1} show \eqref{suf_con_frac}. This completes the proof. 
    \end{example}

\newcommand{\bib}{
  \begingroup
  \renewcommand\bibname{References}
   \bibliographystyle{chicago}
    \let\oldthebibliography=\thebibliography
  \renewcommand\thebibliography[1]{
    \oldthebibliography{##1}
    \small
    \setlength\itemsep{3pt plus 3pt minus 2pt}
  }
  
  \endgroup
}

\bib

\end{document}